\pgfplotsset{compat=1.18}
\newtheorem{theorem}{Theorem}[section]
\newtheorem{proposition}[theorem]{Proposition}
\newtheorem{corollary}[theorem]{Corollary}
\theoremstyle{definition}
\newtheorem{definition}[theorem]{Definition}
\newtheorem{example}[theorem]{Example}
\theoremstyle{remark}
\newtheorem{remark}[theorem]{Remark}
\numberwithin{equation}{section}
\newcommand{\Z}{\mathbb{Z}}
\newcommand{\R}{\mathbb{R}}
\newcommand{\N}{\mathbb{N}}
\newcommand{\Q}{\mathbb{Q}}
\newcommand{\inv}{^{-1}}
\newcommand{\sigper}{S\mathfrak{S}}
\newcommand{\F}{\mathcal{F}}
\newcommand{\shuffle}{\sqcup\mathchoice{\mkern-7mu}{\mkern-7mu}{\mkern-3.2mu}{\mkern-3.8mu}\sqcup}
\def\shuff#1#2{\mathbin{
      \hbox{\vbox{\hbox{\vrule \hskip#2 \vrule height#1 width 0pt}\hrule}\vbox{\hbox{\vrule \hskip#2 \vrule height#1 width 0pt\vrule }\hrule}}}}
\def\shuffl{{\mathchoice{\shuff{5pt}{3.5pt}}{\shuff{5pt}{3.5pt}}{\shuff{3pt}{2.6pt}}{\shuff{3pt}{2.6pt}}}}
\def\shuffle{\, \shuffl \,}
\def\qshuffl{{\mathchoice{\shuff{5pt}{3.5pt}\hspace{-2.9mm}-}{\shuff{5pt}{3.5pt}\hspace{-2.9mm}-}
{\shuff{3pt}{2.6pt}\hspace{-2.2mm}-}{\shuff{3pt}{2.6pt}\hspace{-2.2mm}-}}}
\def\qshuffle{\,\qshuffl\,}
\newcommand{\bicomposition}{\big(\begin{smallmatrix}
a_1 & a_2 & \cdots & a_r\\
b_1 & b_2 & \cdots & b_r
\end{smallmatrix}\big)}
\newcommand{\dgraph}{\overrightarrow{G}}
\newcommand{\sdgraph}{\overrightarrow{\Sigma}}
\title{Chromatic quasisymmetric functions for signed graphs}
\author{Jean-Christophe Aval}
\address{LaBRI, Université de Bordeaux \\
Domaine Universitaire, 351, Cours de la Libération, 33405 Talence \\
France}
\email{aval@labri.fr}
\author{Raquel Melgar}
\address{LaBRI, Université de Bordeaux \\
Domaine Universitaire, 351, Cours de la Libération, 33405 Talence \\
France}
\email{raquel.melgar@labri.fr}
\thanks{Both authors were supported by the ANR (Combiné Project ANR-19-CE48-0011).}
\begin{document}

\begin{abstract}
In 1995, Stanley introduced the chromatic symmetric function of a graph, which specializes to its chromatic polynomial, and which has been the focus of intense research. In 2017, Shareshian, Wachs, and Ellzey defined a refinement of this function for a directed graph, that appears to be in $QSym$, the algebra of quasisymmetric functions, which is of great interest in algebraic combinatorics. Our goal is to extend this work to signed graphs, taking into account the perspective of the hyperplane arrangement associated with a signed graph, developed by Zaslavsky. We introduce the signed chromatic quasisymmetric invariant, and obtain structural properties. As a consequence, we define and study $SQSym$, the algebra of signed quasisymmetric functions.
\end{abstract}

\maketitle
\tableofcontents

\section{Introduction}
Defined in 1995 by Stanley \cite{S95}, the chromatic symmetric function plays a major role among graph invariants. Let $G$ be a (simple, non oriented) graph with vertex set $V$ and edge set $E$. 
A proper coloring of $G$ is a map $\kappa:V\rightarrow \N$ from $V$ to the positive integers $\N=\{1,2,\dots\}$, with the condition $\kappa(u)\neq \kappa(v)$ if $\{u,v\}\in E$: adjacent vertices are assigned different colors. We consider the infinite set of commuting variables $x=(x_1,x_2,\dots)$ and associate to a coloring $\kappa$ the monomial 
$$x^\kappa:=x_{\kappa(v_1)}x_{\kappa(v_2)}\cdots x_{\kappa(v_d)}=\prod_{i\in \N}x_i^{\#\kappa\inv(i)}.$$ 
Then, \textit{the chromatic symmetric function of $G$} is defined as \begin{equation}
X_G(x):= \sum_{\kappa \text{ proper}}x^\kappa.
\end{equation}
It is clear that $X_G\in Sym$, where $Sym$ is the algebra of symmetric functions, since a permutation of the variables is just a renaming of the colors. A remarkable feature of this function is that it specializes in the chromatic polynomial $\chi_G(k)$, i.e. the function that sends each integer $k\geq 0$ to the number of proper colorings of $G$ on $k$ colors. Indeed,
$$\chi_G(k):=\#\{ \text{proper colorings }\kappa\text{ of }G\text{ on }k \text{ colors}\} = X_G(1,1,\dots,1,0,\dots),$$ 
where in the right hand side of the equality we have $k$ ones.

In the same paper, Stanley proposed several conjectures that remain unsolved to this day. This symmetric function has generated extensive research since its introduction, see for instance \cite{GS01}, \cite{OS14} and \cite{MMW08}.

Following \cite{SW16},  Ellzey \cite{E17} introduced a refinement of Stanley's chromatic symmetric function for directed  graphs. Given a simple directed  graph $\overrightarrow{G} = (V,E)$, with $E$ the set of directed edges, the \textit{chromatic quasisymmetric function of $\dgraph$} is
\begin{equation}
\label{equation: Ellzey}
X_{\dgraph}(x;t) := \sum_{\kappa\text{ proper}} t^{asc(\kappa)}x^\kappa
\end{equation}
where $asc(\kappa) = \#\{(u,v)\in E:~\kappa(u)<\kappa(v)\}$. It is straightforward that, by evaluating this function at $t=1$, we recover Stanley's invariant. Also, it is easy to check that $X_{\dgraph}(x;t)$ lies in $QSym[t]$. The importance of the algebra $QSym$ of quasisymmetric functions in algebraic combinatorics is well-known (see for instance the survey \cite{M19}). This invariant is a generalization of the quasisymmetric function introduced in \cite{SW16} for labeled graphs, with the difference that $X_{\dgraph}(x;t)$ also performs with directed  graphs that may contain directed   cycles.

The notion of \textit{signed graphs} was introduced by Harary \cite{H53}. These are graphs in which a sign ($+$ or $-$) is assigned to every edge; we shall talk about positive or negative edges respectively. Zaslavsky \cite{Z82} defined the notion of \textit{proper coloring for signed graphs}. Given a signed graph $\Sigma$ with vertex set $V$, a proper coloring of $\Sigma$ is a map $\kappa:V\rightarrow \Z$ verifying that if $e =\{u,v\}$ is an edge of $\Sigma$ with sign $\epsilon$, then $\kappa(u)\neq \epsilon \kappa(v)$. In other words, a proper signed coloring assigns to the vertices colors from $\Z$ in such a way that vertices connected by a positive edge cannot have the same color, and vertices connected by a negative edge cannot have opposite colors. We call a coloring of $\Sigma$ that does not use the color 0 a \textit{zero-free signed coloring}.

A generalization of Stanley's chromatic symmetric function for the case of signed graph coloring was first introduced in \cite{W97}, then studied in \cite{KT21} and \cite{E16} (this last in the zero-free case). We consider in this case the set of commuting variables $\textbf{x} := (\dots,x_{-1},x_0,x_1,\dots)$. Given a signed graph on $d$ vertices $\Sigma$, its \textit{chromatic signed symmetric function} is
\begin{equation}
X_{\Sigma}(\textbf{x}) := \sum_{\kappa\text{ proper}}\textbf{x}^\kappa = \sum_{\kappa\text{ proper}}x_{\kappa(v_1)}x_{\kappa(v_2)}\cdots x_{\kappa(v_d)}.
\end{equation} 
Whereas the original invariant $X_G$ is a symmetric function, 
the invariant $X_\Sigma$ has a signed symmetry property.
Let $S\mathfrak{S}$ be the signed symmetric group, that is the set of permutations of $\Z$ (with finite support) verifying $\pi(i)= \pi(-i)$ for all $i\in \Z$.
Then $X_\Sigma$ is invariant under the permutation of its variables by any element of $S\mathfrak{S}$.
We denote by $SSym$ the set of formal power series of bounded degree in the variables $\textbf{x} := (\dots,x_{-1},x_0,x_1,\dots)$ with coefficients in $\Q$ that are invariant under the action of $S\mathfrak{S}$ on its variables. This set is in fact an algebra which is studied in \cite{KT21}. 

In \cite{Z91} Zaslavsky develops a theory for the orientations of signed graphs and proves that there exists a bijection between the acyclic orientations of a signed graph on $d$ vertices $\Sigma$ and the connected components of the complement of an hyperplane arrangement $\mathscr{H}_\Sigma \subset \mathbb{R}^{d}$. This remarkable result generalizes the well-known analog result of Green \cite{G77} for the classical case.

The aim of this paper is, taking this interpretation into account, to define an invariant $X_{\sdgraph}(\textbf{x};t)$ for directed signed graphs $\sdgraph$ that refines $X_\Sigma(\textbf{x})$, in the same way as $X_{\dgraph}(x;t)$ refined $X_{G}(x)$. In other words, we want to fill the gap in the table.

\begin{table}[h]
\label{table: fill-the-gap}
\begin{tabular}{c|c|c|}
\cline{2-3}
                                     & undirected  & directed  \\ \hline
\multicolumn{1}{|l|}{simple graph} &   $X_G(x)\in Sym$  &  $X_{\dgraph}(x;t)\in QSym[t]$   \\ \hline
\multicolumn{1}{|l|}{signed graph}              &  $X_{\Sigma}(\textbf{x})\in SSym$    &     ? \\ \hline
\end{tabular}
\end{table}

In Section \ref{sec:sec2} we define this new invariant $X_{\sdgraph}(\textbf{x};t)$ and explain its relation with the hyperplane arrangement $\mathscr{H}_\Sigma$. In Section \ref{sec:sec3} we are interested in the space where this invariant lives and we call it $SQSym$ as it is the analogue of $QSym$ in the signed case. 
We prove that $SQSym$ is an algebra. We propose a fundamental family for this algebra and we describe its relation with the $(P,\omega)$-partition enumerators of signed posets. In Section \ref{sec:sec4} we give an expansion of $X_{\sdgraph}(\textbf{x};t)$ in the fundamental family. And in Section \ref{sec:sec5} we investigate the symmetry question of $X_{\sdgraph}(\textbf{x};t)$. In other words, we study for which directed  signed graphs $\sdgraph$, one has $X_{\sdgraph}(\textbf{x};t)\in SSym[t]$.

\section{The definition of the invariant}
\label{sec:sec2}
	\subsection{Preliminaries on signed graphs}
A \textit{signed graph} $\Sigma$ is a triple $\Sigma=(V,E,\sigma)$, with $V$ a vertex set, $E$ an edge set, and a \textit{sign function} (or \textit{signature}) $\sigma: E\rightarrow \{+,-\}$ that assigns each edge either a $+$ or a $-$ sign. 
We impose the condition that the pairs $(e,\sigma(e))$ are all distinct.
We call $G=(V,E)$ the \textit{underlying unsigned graph of $\Sigma$}; it may have double edges (corresponding to two edges with different signs in $\Sigma$) and it may also have loops.

Given a path $P=(e_1,\dots,e_t)\in E^t$ the \textit{sign of $P$} is the product $\sigma(P):=\sigma(e_1)\cdots \sigma(e_t)$ and we say that $P$ is \textit{balanced} if $\sigma(P)=+$. We say a signed graph $\Sigma$ is \textit{balanced} if given any cycle (closed path with no repeated vertices) $C$ in $\Sigma$, $\sigma(C)=+$. Otherwise we say $\Sigma$ is \textit{unbalanced}.

A \textit{tight handcuff} is the union of two unbalanced cycles sharing exactly one vertex, a \textit{loose handcuff} is the union of two unbalanced cycles sharing no vertices and a path that joins them. A \textit{frame circuit} of a signed graph $\Sigma = (V,E,\sigma)$ is a subgraph of $\Sigma$ that is either a balanced cycle, a tight handcuff or a loose handcuff.

\begin{figure}[h]
    \centering
    \includegraphics[width=0.25\linewidth]{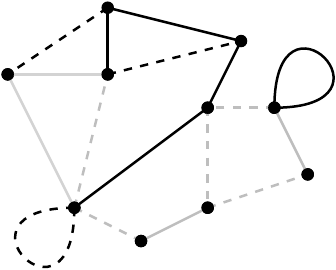}
    \caption{A signed graph, where the negative edges have been drawn with dashed lines, with one of its frame circuits in gray.}
    \label{fig: frame-circuit}
\end{figure}

A \textit{switching function} for $\Sigma = (V,E,\sigma)$ is a map $\nu:V\rightarrow \{+,-\}$. Such a function acts on signatures and thus on signed graphs as follows. A signature $\sigma$ may be switched by $\nu$ to get ${\sigma^\nu} : E  \rightarrow \{+,-\}$ defined as $\sigma(e):=\nu(u)\sigma(e)\nu(v)$ where $u,v\in V$ are the endpoints of $e\in E$. And $\Sigma^\nu:=(V,E,\sigma^\nu)$ may be called $\Sigma$ \textit{switched by} $\nu$. We say that two signed graphs $\Sigma=(V,E,\sigma)$ and $\Sigma'=(V,E,\sigma')$ are \textit{switching equivalent} if there exists a switching function $\nu$ such that $\sigma'=\sigma^\nu$. 

\begin{remark}Switching does not alter balance; given a signed graph $\Sigma = (V,E,\sigma)$ and a switching function $\nu$ then $\Sigma$ is balanced if and only if $\Sigma^\nu$ is balanced. To see this, take a cycle $C=(e_1,e_2,\dots,e_t)$ on $G=(V,E)$, it is easy to check that $\sigma(C)=\sigma^\nu(C)$.

It can be shown \cite{Z82+} that a signed graph is balanced if and only if it is switching equivalent to $\Sigma^+$, the signed graph obtained by letting all edges in $\Sigma$ be positive.
\end{remark}

\begin{figure}
    \centering
    \includegraphics[width=0.65\linewidth]{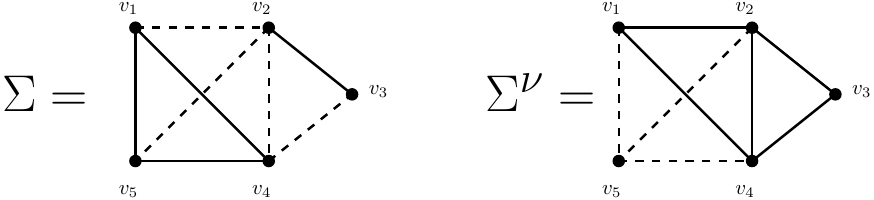}
    \caption{A signed graph $\Sigma$ and $\Sigma^\nu$ where $\nu:V\rightarrow \{+,-\}$ sends every vertex to $+$ except $v_1$ and $v_4$.}
    \label{fig: swiched-sigma}
\end{figure}

\subsection{Directed signed graphs}
A \textit{bidirection} of a graph $G=(V,E)$  is any map $\tau:I(G)\rightarrow\{+,-\}$, where the \textit{incidence set} $I(G)$ is the set of pairs $(u,e)$ where $u\in V$ is an endpoint of $e\in E$. The interpretation of $\tau$ is the following: if $\tau(u,e)=+$ then the indicence $(u,e)$ points into the vertex $u$ and if $\tau(u,e)=-$, the incidence points away from $u$. Given a signed graph $\Sigma=(G,\sigma)$ we call an \textit{orientation of $\Sigma$} a bidirection $\tau$ of $G=(V,E)$ verifying 
\begin{equation}
\label{equation: condicion-orientation}
\sigma(e)=-\tau(u,e)\tau(v,e)
\end{equation}
on every edge $e = \{u,v\}\in E$. A \textit{directed signed graph} $\sdgraph$ is a pair $(\Sigma, \tau)$ where $\Sigma = (V,E,\sigma)$ is a signed graph and $\tau$ satisfies \eqref{equation: condicion-orientation}. This means that in a directed  signed graph $\sdgraph$, positive edges are directed in the usual way but negative edges are directed with the arrows pointing inwards (introverted edges) or outwards (extroverted edges).

\begin{figure}
    \centering
    \includegraphics[width=0.8\linewidth]{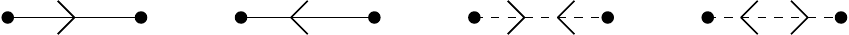}
    \caption{Possible orientations of an edge}
\end{figure}

By a slight abuse, we will allow to have two edges of the same sign between two vertices if they are directed in opposite ways, the same applies to loops. We call $\Sigma$ the \textit{underlying undirected  graph of} $\sdgraph$, in which we eliminate edges of the same sign that repeat. We may also write $\sdgraph=(V,E,\sigma,\tau)$.

Given a directed  signed graph $\sdgraph=(V,E,\sigma, \tau)$ and a switching function $\nu$, we define the \textit{switched directed signed graph} as $\sdgraph^\nu:=(V,E,\sigma^\nu, \tau^\nu)$ where $\tau^\nu(u,e):=\nu(u)\tau(u,e)$.

\begin{figure}[h]
    \centering
    \includegraphics[width=0.4\linewidth]{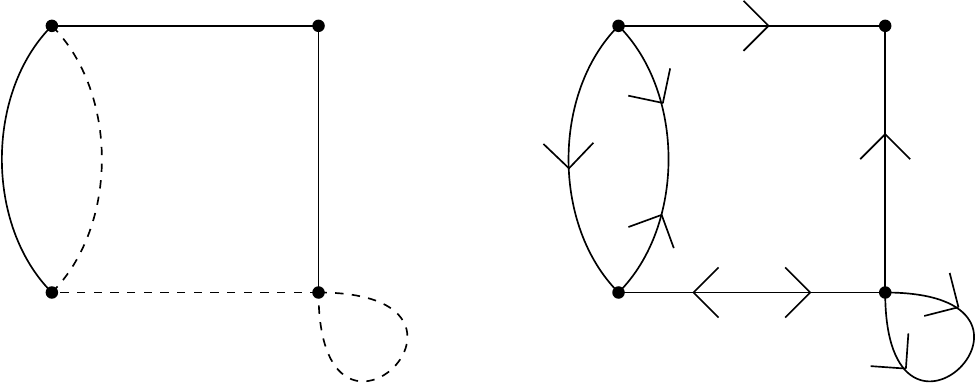}
    \caption{A signed graph and one of its possible orientations.}
\end{figure}
\begin{definition}
Given a signed graph $\Sigma = (V,E,\sigma)$, a \textit{coloring of} $\Sigma$ is any map $\kappa: V\rightarrow \Z$. We say that $\kappa$ is a \textit{proper coloring} if
\begin{equation}
\kappa(u)\neq \sigma(e)\kappa(v)\text{ for all } e\in E
\end{equation} where $u,v\in V$ are the endpoints of $e$. Given a switching function $\nu$ we note $\kappa^\nu$ the coloring $\kappa^\nu(u):=\nu
(u)\kappa(u)$ for all $u\in V$.
\end{definition}

\begin{remark} The signed graphs that we consider in this paper are graphs without half edges; these correspond to what Zaslavsky calls \textit{ordinary graphs} in his work. It is sufficient to consider only these, since in our context half edges play exactly the same role as negative loops. We will also only consider negative loops, and not positive ones, since by definition the latter do not allow proper colorings. We do not treat zero-free colorings as a separate case, since it suffices to add a negative loop at each vertex to restrict to zero-free colorings. 
\end{remark}

\begin{definition}
Given a signed graph $\Sigma=(V,E,\sigma)$, we say that a coloring $\kappa$ and an orientation $\tau$ are \textit{compatible} on an edge $e\in E$ with endpoints $u$ and $v$ if 
\begin{equation}
    \tau(u,e)\kappa(u)+\tau(v,e)\kappa(v)\leq 0.
\end{equation}
We say that $\kappa$ and $\tau$ are \textit{compatible} if they are compatible on every edge $e\in E$.
\end{definition}

Note that given a switching function $\nu:V\rightarrow \{+,-\}$, $\kappa$ is a proper coloring of $\Sigma = (V,E,\sigma)$ if and only if $\kappa^\nu$ is a proper coloring of $\Sigma^\nu$ and given an orientation $\tau$, $\kappa$ and $\tau$ are compatible on $e\in E$ if and only if $\kappa^\nu$ and $\tau^\nu$ are compatible on $e$.

\begin{remark}
    The convention of compatibility between colorings and orientations follows Zaslavksy \cite{Z82}. If $(u,v)$ is a positive edge directed from $u$ to $v$, a coloring $\kappa$ is compatible with $(u,v)$ if $\kappa(u)>\kappa(v)$. In \cite{SW16,E17}, the opposite convention is used.
\end{remark}

\begin{definition}
    Given a signed graph $\Sigma$ with an orientation $\tau$, we say that $\tau$ is \textit{acyclic} if every frame circuit in $\Sigma$ has a source or a sink, i.e. every frame circuit in $\Sigma$ has a vertex $v$, either with all arrows pointing inwards or with all arrows pointing outwards.
\end{definition}
This definition means that in an acyclic orientation, in every closed path we can find a source or a sink. 

\subsection{Symmetric graphs}
There is a natural correspondence between signed graphs on $d$ vertices and symmetric graphs on $2d$ vertices. The latter are also called \textit{double covering graphs} in \cite{Z82+}. This point of view not only sheds some light on the constructions, but it is crucial to define specific objects, as the poset associated to a directed signed graph (Section \ref{sec:sec4}).

\begin{definition}
    A \textit{symmetric graph} is a graph $G= (V, E)$ with vertex set $$V=\{u_1,u_{-1},u_2,u_{-2},\dots,u_d,u_{-d}\}$$ such that if $\{u_i,u_j\}\in E$ then $\{u_{-i},u_{-j}\}\in E$.
\end{definition}

The correspondence is as follows.
\begin{table}[h]
\begin{tabular}{|c|c|}
\hline
Signed graph  $\Sigma$       & Symmetric graph $G_\Sigma$                              \\ \hline
Vertex $v_i$                 & Pair of vertices $u_i$, $u_{-i}$                        \\ \hline
Positive edge  $\{v_i,v_j\}$ & Edges $\{u_i, u_j\}$ and $\{u_{-i},u_{-j}\}$            \\ \hline
Negative edge  $\{v_i,v_j\}$ & Edges $\{u_i, u_{-j}\}$ and $\{u_{-i},u_{j}\}$ \\ \hline
Positive loop in $v_i$       & Loops in $u_i$ and $u_{-i}$                             \\ \hline
Negative loop in $v_i$       & Edge $\{u_i,u_{-i}\}$                               \\ \hline
\end{tabular}
\end{table}

Colorings and orientations of signed graphs also have their analogues on symmetric graphs.

\begin{definition}
    A \textit{proper coloring of a symmetric graph} $G = (V,E)$ with $V=\{u_1,u_{-1},\dots,u_d,u_{-d}\}$ is a map $\kappa: V \rightarrow \Z$ verifying 
    \begin{itemize}
        \item $\kappa(u_i)=-\kappa(-u_i)$ for every $i\in\{1,\dots,d\}$
        \item $\kappa(u_i)\neq \kappa(u_j)$ for every edge $\{u_i,u_j\}\in E$  \end{itemize}
\end{definition}

\begin{definition}
    A \textit{symmetric orientation of a symmetric graph } $G = (V,E)$ is an orientation of $G$ such that if $\{u_i,u_j\}\in E$ is directed from $u_i$ to $u_j$, then $\{u_{-i},u_{-j}\}$ is directed from $u_{-j}$ to $u_{-i}$. We say that a symmetric orientation is \textit{acyclic} if it is acyclic in the usual sense.
\end{definition}

From a coloring $\kappa$ of $\Sigma$ we obtain a symmetric coloring $\tilde{\kappa}$ of $G_\Sigma$ by letting $\tilde{\kappa}(u_i)=\kappa(v_i)$ and $\tilde{\kappa}(u_{-i})=-\kappa(v_i)$ for $i=1,2,\dots,d$. And from an orientation  $\tau$ of $\Sigma$ we can obtain a symmetric orientation of $G_\Sigma$ in the following way, where $(u_i,u_j)$ represents a directed  edge from $u_i$ to $u_j$.
\begin{table}[h]
\begin{tabular}{|l|l|}
\hline
Positive edge from $v_i$ to $v_j$        & $(v_i,v_j)$ and $(v_{-j},v_{-i})$     \\ \hline
Introverted edge between $v_i$ and $v_j$ & $(v_i,v_{-j})$ and $(v_j,v_{-i})$     \\ \hline
Extroverted edge between $v_i$ and $v_j$ & $(v_{-i},v_{j})$ and $(v_{-j},v_{i})$ \\ \hline
\end{tabular}
\end{table}

\begin{figure}
    \centering
    \includegraphics[width=0.5\linewidth]{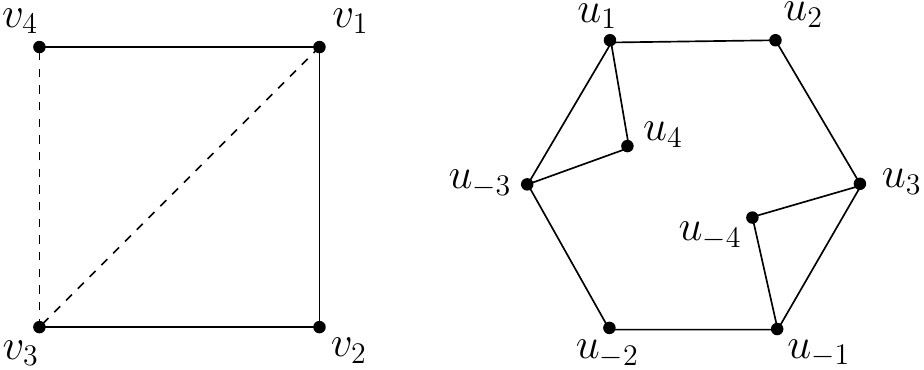}
    \caption{A signed graph and its corresponding symmetric graph}
    \label{fig: correspondence-symmetric}
\end{figure}

These correspondences send proper colorings of $\Sigma$ to proper symmetric colorings of $G_\Sigma$ and acyclic orientations of $\Sigma$ to acyclic symmetric orientations of $G_\Sigma$ respectively.
 
	\subsection{Hyperplane arrangements}
The aim of this subsection is to interpret the quasisymmetric invariant of \eqref{equation: Ellzey} defined in \cite{E17}, in terms of arrangements of hyperplanes. This point of view is a guideline for the definition of our invariant.

Given a simple graph $G = (V,E)$ with $V = \{v_1,\dots,v_d\}$ we can associate to it an hyperplane arrangement $\mathscr{H}_G$ in $\R ^d$ $$\mathscr{H}_G:=\bigcup_{\{v_i,v_j\}\in E}\{(\xi_1,\dots,\xi_d):~\xi_i=\xi_j\}.$$ Then we can associate to every integer point $\alpha = (\alpha_1,\dots,\alpha_d)\in\N^d\backslash \mathscr{H}_G$ the coloring $\kappa_\alpha: V \rightarrow \N$ that sends $v_i$ to $\alpha_i$ for each $i= 1,\dots,d$, which is proper since $\alpha\notin \mathscr{H}_G$. We call \textit{chambers of $\mathscr{H}_G$} the connected components of $\R^d\backslash \mathscr{H}_G$. We recall the following classical result.
\begin{proposition}\label{prop:green}
\cite{G77} Given a simple graph $G$, there exists a bijection between the chambers of $\mathscr{H}_G$ and the acyclic orientations of $G$.
\end{proposition}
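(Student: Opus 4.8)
The plan is to establish the bijection between chambers of $\mathscr{H}_G$ and acyclic orientations of $G$ by constructing explicit maps in both directions and verifying they are mutually inverse. This is a classical result of Greene, so the goal here is to sketch the natural argument rather than invent something new.

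First I would describe the map from chambers to orientations. Given a chamber $C$ of $\R^d\backslash\mathscr{H}_G$, every point $\xi=(\xi_1,\dots,\xi_d)$ in $C$ avoids all the hyperplanes $\xi_i=\xi_j$ for $\{v_i,v_j\}\in E$, so for each edge $\{v_i,v_j\}$ the sign of $\xi_i-\xi_j$ is constant throughout $C$ (it cannot change without crossing the hyperplane, and $C$ is connected). Thus I can define an orientation $\orientation_C$ by directing the edge $\{v_i,v_j\}$ from $v_i$ to $v_j$ precisely when $\xi_i>\xi_j$ on $C$. The key verification is that $\orientation_C$ is acyclic: if there were a directed cycle $v_{i_1}\to v_{i_2}\to\cdots\to v_{i_k}\to v_{i_1}$, then on any point of $C$ we would have $\xi_{i_1}>\xi_{i_2}>\cdots>\xi_{i_k}>\xi_{i_1}$, which is impossible. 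So every chamber yields a well-defined acyclic orientation.

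Next I would treat the reverse direction. Given an acyclic orientation $\orientation$ of $G$, the region $R_\orientation:=\{\xi\in\R^d:~\xi_i>\xi_j \text{ whenever the edge }\{v_i,v_j\}\text{ is directed } v_i\to v_j\}$ is an intersection of open half-spaces, hence convex and in particular connected, and it lies in $\R^d\backslash\mathscr{H}_G$. The crucial point is that $R_\orientation$ is \emph{nonempty}, which is exactly where acyclicity is used: an acyclic orientation induces a partial order on $V$, which extends to a total order (a topological sort), and assigning strictly decreasing real values along that total order produces an explicit point of $R_\orientation$. Being a nonempty connected subset of $\R^d\backslash\mathscr{H}_G$, the region $R_\orientation$ is contained in a unique chamber; I then check that the orientation read off from that chamber is $\orientation$ itself, and conversely that starting from a chamber $C$, the region $R_{\orientation_C}$ meets $C$, so the two constructions are inverse to one another. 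This gives the bijection.

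The main obstacle, and the only substantive content, is the nonemptiness of $R_\orientation$ for acyclic $\orientation$ — equivalently, the fact that an acyclic orientation admits a compatible linear extension (topological sort). I would isolate this as the heart of the proof; everything else is the routine bookkeeping of checking that a locally constant sign pattern on a connected set is globally constant and that the two maps compose to the identity. One should also note the harmless technical point that points with repeated coordinates lying on $\mathscr{H}_G$ are excluded, so a strictly decreasing assignment along the topological order is needed to land in the open region.
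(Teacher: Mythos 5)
Your proposal is correct and follows exactly the same approach as the paper, which only sketches the forward map (orient $\{v_i,v_j\}$ as $(v_i,v_j)$ when $\xi_i>\xi_j$, constant on each chamber and acyclic). Your write-up simply completes that sketch by making the inverse map explicit and isolating the nonemptiness of $R_\orientation$ via a topological sort, which is the right place to put the weight.
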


The idea is that given $\xi= (\xi_1,\dots,\xi_d)\in \R^d\backslash \mathscr{H}_G$ we can orient an edge $\{v_i,v_j\}\in E$ as $(v_i,v_j)$  if $\xi_i>\xi_j$. It is not difficult to see that if we orient every edge following this rule we will get an acyclic orientation and that we will obtain the same orientation for every $\xi$ in the same chamber.
\begin{proposition}
Given a directed  graph $\dgraph = (V,E)$ and a chamber $C$ of $\mathscr{H}_G$,  $asc(\kappa_\alpha)=\#\{(v_i,v_j)\in E:~ \kappa_\alpha(v_i)<\kappa_\alpha(v_j)\}$ is constant for every $\alpha\in \N^d\cap C$.
\end{proposition}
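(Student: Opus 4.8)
The plan is to show that the ascent statistic $\mathrm{asc}(\kappa_\alpha)$ depends only on the chamber $C$ containing $\alpha$, not on the particular point $\alpha \in \N^d \cap C$. The key observation is that the ascent count is a sum of local contributions, one per directed edge, and each such contribution is determined by which side of the hyperplane $\{\xi_i = \xi_j\}$ the point $\alpha$ lies on.

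First I would fix a directed edge $(v_i,v_j)\in E$ and consider the associated hyperplane $H_{ij} = \{(\xi_1,\dots,\xi_d) : \xi_i = \xi_j\}$, which is one of the hyperplanes comprising $\mathscr{H}_G$. Since $\alpha \in \N^d \cap C$ lies in a chamber of $\mathscr{H}_G$, in particular $\alpha \notin H_{ij}$, so either $\alpha_i < \alpha_j$ or $\alpha_i > \alpha_j$. The quantity $\#\{(v_i,v_j)\in E : \kappa_\alpha(v_i) < \kappa_\alpha(v_j)\}$ counts precisely those edges for which $\alpha_i < \alpha_j$, i.e.\ those edges whose hyperplane is crossed to the ``$<$'' side by $\alpha$.

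Next I would argue that the sign of $\xi_i - \xi_j$ is constant on each connected component of $\R^d \setminus \mathscr{H}_G$. Indeed, the set $\{\xi : \xi_i > \xi_j\}$ and the set $\{\xi : \xi_i < \xi_j\}$ are the two open half-spaces determined by $H_{ij}$, and any chamber $C$ of $\mathscr{H}_G$, being connected and disjoint from $H_{ij}$, must lie entirely within one of these two half-spaces. This is exactly the observation already recorded in the discussion preceding Proposition~\ref{prop:green}, namely that the orientation rule ``$\xi_i > \xi_j \Rightarrow (v_i,v_j)$'' yields the same orientation for every $\xi$ in a fixed chamber. Hence for all $\alpha \in \N^d \cap C$ the truth value of $\alpha_i < \alpha_j$ is the same.

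Finally I would conclude by summing over edges: since each indicator $[\kappa_\alpha(v_i) < \kappa_\alpha(v_j)]$ is constant on $\N^d \cap C$, so is their sum $\mathrm{asc}(\kappa_\alpha)$. The main point—really the only subtlety—is the connectedness argument showing a chamber cannot straddle a hyperplane; everything else is bookkeeping. One minor caveat worth noting is that the statement restricts $\alpha$ to $\N^d \cap C$ rather than all of $C$, but this restriction is harmless since the half-space containment holds for the whole chamber and a fortiori for its integer points.
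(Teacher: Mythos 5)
Your proof is correct and follows essentially the same route as the paper's: both argue edge by edge that, since each hyperplane $\{\xi_i=\xi_j\}$ belongs to $\mathscr{H}_G$, the connected chamber $C$ lies entirely in one of the two open half-spaces it determines, so the truth value of $\alpha_i<\alpha_j$ is the same for all $\alpha\in\N^d\cap C$, and summing the indicators over edges finishes the argument. You simply make explicit the connectedness step that the paper compresses into ``it is clear.''
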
 
\begin{proof}
It is clear since for every edge $(v_i,v_j)\in E$, as $\{\xi_i=\xi_j\}\in \mathscr{H}_G$, the condition $\xi_i<\xi_j$ either holds or does not hold for every $\xi\in C$. 
\end{proof}

 Then the quasisymmetric invariant of \eqref{equation: Ellzey} admits the following interpretation: 
 \[X_{\dgraph}(x_1,x_2,\dots;t)=X_{\dgraph}(x;t)=\sum\limits_{C}t^{asc(C)}\sum\limits_{\alpha\in C\cap \N^d}x_{\alpha_1}\cdots x_{\alpha_d}, \] 
 where the first sum runs over the chambers of $\mathscr{H}_G$ and $asc(C)$ is $asc(\kappa_\alpha)$ for any $\alpha\in C \cap \N^d$. Note that the edges counted by $asc(\kappa)$ are the edges in which $\kappa$ is not compatible with the orientation in $\dgraph$.
 
 We shall use this geometric point of view in the signed case. Given a signed graph $\Sigma = (V, E, \sigma)$ with $V = \{ v_1,\dots,v_d\}$, we can associate to it the hyperplane arrangement $\mathscr{H}_\Sigma$ in $\R^d$ defined by 
\[\mathscr{H}_\Sigma:= \big[\bigcup_{\{v_i,v_j\}\in \sigma\inv (+)}\{\xi_i= \xi_j\}\big]\cup \big[\bigcup_{\{v_i,v_j\}\in \sigma\inv (-)}\{\xi_i= -\xi_j\}\big] \cup \big[ \bigcup_{\{v_i,v_i\}\in \sigma\inv (-)}\{\xi_i= 0\}\big].\]

That is, for every positive edge $\{v_i,v_j\}$ we consider the hyperplane $\xi_i = \xi_j$, for every negative edge $\{v_i,v_j\}$ we consider the hyperplane $\xi_i =  - \xi_j$, and for every negative loop in $v_i$ we consider the hyperplane $\xi_i=0$. 

Here again, we have a bijection between $\Z^d\backslash \mathscr{H}_\Sigma$ and the proper colorings of $\Sigma$ and we also have the following result, analogous to Proposition \ref{prop:green}.
\begin{theorem}
\cite{Z91} Given a signed graph $\Sigma$, there is a bijection between the chambers  of $\mathscr{H}_\Sigma$ and the acyclic orientations of $\Sigma$. 
\end{theorem}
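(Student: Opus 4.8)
The plan is to avoid arguing directly with frame circuits and instead lift everything to the double cover $G_\Sigma$, where the statement becomes Greene's theorem (Proposition~\ref{prop:green}). First I would identify $\R^d$ with the antisymmetric subspace $W=\{y\in\R^{2d}:y_{u_{-i}}=-y_{u_i}\text{ for all }i\}$ via the map $\xi\mapsto\tilde\xi$ with $\tilde\xi(u_i)=\xi_i$ and $\tilde\xi(u_{-i})=-\xi_i$, exactly the lift of colorings used to pass from $\Sigma$ to $G_\Sigma$. A short computation then shows that $\mathscr{H}_\Sigma$ is precisely the restriction of the graphic arrangement $\mathscr{H}_{G_\Sigma}$ to $W$: a positive edge $\{v_i,v_j\}$ lifts to $\{u_i,u_j\},\{u_{-i},u_{-j}\}$, whose hyperplanes $y_i=y_j$ and $y_{-i}=y_{-j}$ both cut out $\xi_i=\xi_j$ on $W$; a negative edge lifts to $\{u_i,u_{-j}\},\{u_{-i},u_j\}$, giving $\xi_i=-\xi_j$; and a negative loop at $v_i$ lifts to $\{u_i,u_{-i}\}$, giving $\xi_i=0$. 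Since no hyperplane $y_a=y_b$ of $G_\Sigma$ contains $W$, each trace is a genuine hyperplane of $W$, so the chambers of $\mathscr{H}_\Sigma$ are exactly the nonempty sets $W\cap R$ as $R$ ranges over the chambers of $\mathscr{H}_{G_\Sigma}$.

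Next I would introduce the linear involution $\rho$ on $\R^{2d}$ defined by $(\rho y)_{u_i}=-y_{u_{-i}}$, whose fixed space is $W$ and which permutes the hyperplanes of $\mathscr{H}_{G_\Sigma}$ because $G_\Sigma$ is symmetric (it sends the hyperplane of $\{u_a,u_b\}$ to that of $\{u_{-a},u_{-b}\}$). Consequently $\rho$ acts on the chambers of $\mathscr{H}_{G_\Sigma}$, and I would prove the key reduction: a chamber $R$ meets $W$ if and only if $\rho R=R$. Indeed, if $p\in R\cap W$ then $\rho p=p\in R\cap\rho R$, forcing $\rho R=R$; conversely, if $\rho R=R$ then for any $p\in R$ the midpoint $\tfrac12(p+\rho p)$ lies in $R$ by convexity and in $W$ by construction. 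This identifies the chambers of $\mathscr{H}_\Sigma$ with the $\rho$-invariant chambers of $\mathscr{H}_{G_\Sigma}$.

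Finally I would apply Greene's bijection: a chamber $R$ of $\mathscr{H}_{G_\Sigma}$ corresponds to the acyclic orientation orienting $\{u_a,u_b\}$ as $u_a\to u_b$ iff $y_a>y_b$ on $R$. Under $\rho$, the sign of $y_a-y_b$ on $R$ equals the sign of $y_{-b}-y_{-a}$, so $\rho R=R$ translates exactly into the symmetric-orientation condition ($u_a\to u_b$ iff $u_{-b}\to u_{-a}$). Hence $\rho$-invariant chambers biject with the \emph{symmetric} acyclic orientations of $G_\Sigma$, which by the double-cover correspondence established just above biject with the acyclic orientations of $\Sigma$. Chaining these three bijections yields the theorem, and also recovers, as a byproduct, that $\tau$ is read off from any $\xi$ in the chamber by compatibility on each edge.

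I expect the genuine obstacle to be the last link: verifying that the correspondence orientations of $\Sigma$ $\leftrightarrow$ symmetric orientations of $G_\Sigma$ is a bijection under which acyclicity in the frame-circuit sense matches ordinary acyclicity of $G_\Sigma$. Concretely, one must check that balanced cycles, tight handcuffs, and loose handcuffs of $\Sigma$ are exactly the subgraphs lifting to cycles of $G_\Sigma$ (an unbalanced cycle lifts to a single cycle of doubled length joining the two copies, and a handcuff's two unbalanced cycles glue through the shared vertex or connecting path into one lifted cycle), so that ``source or sink in every frame circuit'' becomes ``no directed cycle'' upstairs. This is the delicate combinatorial point; the arrangement-theoretic steps (the equality $\mathscr{H}_\Sigma=\mathscr{H}_{G_\Sigma}\cap W$ and the $\rho$-invariance argument) are routine. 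As a consistency check one can instead argue surjectivity directly, producing from an acyclic $\tau$ a compatible real point $\xi$ by a longest-path/topological-sort construction after switching each unbalanced cycle to a single negative edge.
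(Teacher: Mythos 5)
The paper itself offers no proof of this statement: it is imported wholesale from Zaslavsky \cite{Z91}. Your attempt must therefore stand as a complete argument on its own, and judged that way it has one genuine gap. The arrangement-theoretic part is correct and complete: $\mathscr{H}_\Sigma$ is the trace on the antisymmetric subspace $W$ of the graphic arrangement $\mathscr{H}_{G_\Sigma}$ (no hyperplane of the cover contains $W$, and since chambers and $W$ are convex, the chambers of the trace are exactly the nonempty sets $W\cap R$); the midpoint argument correctly identifies the chambers meeting $W$ with the $\rho$-invariant ones; and Greene's bijection (Proposition~\ref{prop:green}) is $\rho$-equivariant, so $\rho$-invariant chambers correspond exactly to symmetric acyclic orientations of $G_\Sigma$.

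The gap is the third link, which you flag but do not close, and whose proposed verification rests on a false statement. It is not true that balanced cycles and handcuffs ``are exactly the subgraphs lifting to cycles of $G_\Sigma$'': a single unbalanced cycle of length $k$, which is \emph{not} a frame circuit, also lifts to a cycle of $G_\Sigma$ (one cycle of length $2k$ running through both copies). So the undirected dictionary you propose cannot by itself convert ``every frame circuit has a source or sink'' into ``no directed cycle upstairs''. What rescues the equivalence is a parity fact your sketch never uses: for any cycle $C$ with orientation $\tau$, one has $\sigma(C)=(-1)^{m(C)}$, where $m(C)$ is the number of vertices of $C$ at which the two incidences of $C$ carry the same $\tau$-sign. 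Hence an unbalanced cycle always has such an ``incoherent'' vertex, so its lifted $2k$-cycle is never coherently directed (this is why lone unbalanced cycles are harmless); and in a handcuff with no source or sink, the parity forces each constituent unbalanced cycle to be incoherent exactly at its junction vertex, which is what lets you assemble a directed cycle upstairs from half of each lifted cycle together with the two lifts of the connecting path. This lemma is needed in both directions --- in particular for the harder half, showing that a minimal directed cycle of the cover projects onto a coherently traversed frame circuit and onto nothing else --- and without it the chain of bijections is incomplete. A second, smaller problem: you end by invoking ``the double-cover correspondence established just above'', i.e.\ the paper's lifting table, but that table is not internally consistent (its lifts of introverted and extroverted edges reverse the compatibility convention it uses for positive edges), and with the table as printed an acyclic orientation of $\Sigma$ need not go to an acyclic symmetric orientation: take two vertices joined by a positive edge, an extroverted loop at the tail, an introverted loop at the head --- this is acyclic, yet its table-lift contains a directed $4$-cycle. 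You must therefore first fix your own convention, lifting each edge so that compatible colorings correspond on both sides, before your third step can even be stated correctly.
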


\begin{example}\label{ex:hyper}
Let $\Sigma$ be the signed graph consisting of two vertices, $v_1$ and $v_2$, connected by both a positive and a negative edge, with an additional negative loop at $v_1$. Figure \ref{figure: double example chambers} illustrates the acyclic orientations of $\Sigma$ and of $G_\Sigma$, the symmetric graph associated to $\Sigma$, each placed in the corresponding chamber of $\mathscr{H}_\Sigma \subset \R^2$.
    \begin{figure}[h]
        \centering
        \includegraphics[width=0.85\linewidth]{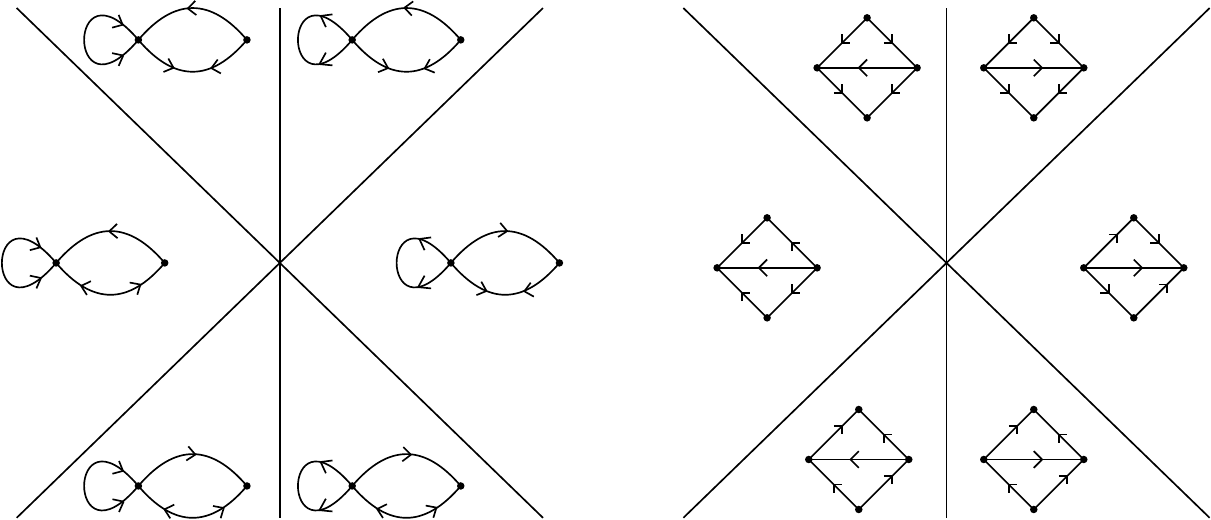}
        \caption{Hyperplane arrangement and acyclic orientations of Example~\ref{ex:hyper}
        \label{figure: double example chambers}}
    \end{figure}
\end{example}

\subsection{Main definition and basic results}
                           
\begin{definition}
Given a directed signed graph $\sdgraph = (V,E,\sigma, \tau)$, we define its \textit{chromatic signed quasisymmetric function} as
\begin{equation}
X_{\sdgraph}(\dots,x_{-1},x_0,x_1,\dots;t)=X_{\sdgraph}(\textbf{x};t) : = \sum_{\kappa \text{ proper}}t^{asc(\kappa)}\textbf{x}^\kappa 
\end{equation}
where $asc(\kappa):=\#\{e=\{v_i,v_j\}\in E:~ \tau(v_i,e)\kappa(v_i)+ \tau(v_j,e)\kappa(v_j)> 0\}$.
\end{definition}

As required, this definition is compatible
with the interpretation in terms of hyperplane arrangements, as:
\begin{equation}
\label{equation: suma-signed-chambers}
X_{\sdgraph}(\textbf{x};t)= \sum_{C}t^{asc(C)}\sum\limits_{\alpha\in C\cap\Z^d}x_{\alpha_1}\cdots x_{\alpha_d}
\end{equation}
where the first sum runs over the chambers of $\mathscr{H}_\Sigma$.

It is clear that $X_{\sdgraph}(\textbf{x};1)=X_{\Sigma}(\textbf{x})\in SSym$ is the invariant defined in \cite{KT21}, and that $X_{\sdgraph}(\textbf{x};1)|_{x_0=0}$ is the invariant defined in \cite{E16}.

\begin{example}\label{ex:the-example}
Consider the directed signed graph $\sdgraph$ in Figure \ref{figure: example-deux-sommets}. Then 
\begin{align*}
X_{\sdgraph}(\textbf{x};t)  = & t^3[\sum_{i<j}x_ix_j] + t^2[2\sum
_{i<j}x_{-i}x_{j}+\sum_{i<j}x_ix_j+x_0\sum_{i}x_i]\\
& + t[2\sum_{i<j}{x_ix_{-j}}+\sum_{i<j}x_{-i}x_{-j}+x_0\sum_{i}x_{-i}] + t^0[\sum_{i<j}x_{-i}x_{-j}]
\end{align*}
where the indices run through the set of positive integers $\N$.

\begin{figure}[h]
\includegraphics[width=80mm]{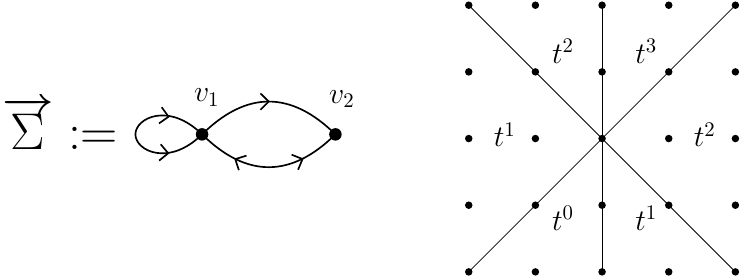}
\caption{A directed  signed graph and its corresponding arrangement in $\R^2$.}
\label{figure: example-deux-sommets}
\end{figure}
\end{example}

We start with an easy statement.
\begin{proposition}
    If $\sdgraph_1+\sdgraph_2$ denotes the disjoint union of two directed signed graph $\sdgraph_1$ and $\sdgraph_2$, then one has:
    $$ X_{\sdgraph_1+\sdgraph_2}(\textbf{\em{x}};t) = X_{\sdgraph_1}(\textbf{\em{x}},t)X_{\sdgraph_2}(\textbf{\em{x}},t).$$
\end{proposition}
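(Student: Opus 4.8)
The plan is to show that proper colorings of the disjoint union $\sdgraph_1 + \sdgraph_2$ are precisely pairs of proper colorings, one of each factor, and that the ascent statistic and the monomial weight both split multiplicatively over such pairs. Since the defining sum $X_{\sdgraph}(\textbf{x};t) = \sum_{\kappa\text{ proper}} t^{asc(\kappa)}\textbf{x}^\kappa$ is a sum over proper colorings weighted by $t^{asc(\kappa)}\textbf{x}^\kappa$, establishing these two factorizations lets the product $X_{\sdgraph_1}(\textbf{x};t)\,X_{\sdgraph_2}(\textbf{x};t)$ be recognized as the expansion of the single sum over $\sdgraph_1+\sdgraph_2$.

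\textbf{First I would} set up the bijection on colorings. Write $\sdgraph_1 = (V_1, E_1, \sigma_1, \tau_1)$ and $\sdgraph_2 = (V_2, E_2, \sigma_2, \tau_2)$ with $V_1 \cap V_2 = \emptyset$, so the disjoint union has vertex set $V = V_1 \sqcup V_2$ and edge set $E = E_1 \sqcup E_2$. A coloring $\kappa : V \to \Z$ restricts to colorings $\kappa_1 := \kappa|_{V_1}$ and $\kappa_2 := \kappa|_{V_2}$, and conversely any pair $(\kappa_1, \kappa_2)$ glues to a unique $\kappa$. Because every edge of $\sdgraph_1 + \sdgraph_2$ lies entirely in one factor, the properness condition $\kappa(u) \neq \sigma(e)\kappa(v)$ on an edge $e$ depends only on the restriction to that factor. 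Hence $\kappa$ is proper for $\sdgraph_1+\sdgraph_2$ if and only if $\kappa_1$ is proper for $\sdgraph_1$ and $\kappa_2$ is proper for $\sdgraph_2$, giving a bijection between proper colorings of the union and pairs of proper colorings of the factors.

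\textbf{Next I would} verify the two multiplicative splittings. For the ascent statistic, the set $\{e \in E : \tau(v_i,e)\kappa(v_i)+\tau(v_j,e)\kappa(v_j) > 0\}$ partitions according to whether $e \in E_1$ or $e \in E_2$, and on each edge the summand depends only on the relevant restricted coloring and bidirection; therefore $asc(\kappa) = asc(\kappa_1) + asc(\kappa_2)$, so that $t^{asc(\kappa)} = t^{asc(\kappa_1)}\,t^{asc(\kappa_2)}$. For the monomial weight, $\textbf{x}^\kappa = \prod_{v \in V} x_{\kappa(v)}$ factors as $\big(\prod_{v \in V_1} x_{\kappa_1(v)}\big)\big(\prod_{v \in V_2} x_{\kappa_2(v)}\big) = \textbf{x}^{\kappa_1}\,\textbf{x}^{\kappa_2}$ by the disjointness of $V_1$ and $V_2$.

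\textbf{Finally,} combining these, the sum over proper $\kappa$ factors as
\[
X_{\sdgraph_1+\sdgraph_2}(\textbf{x};t) = \sum_{\substack{\kappa_1\text{ proper}\\ \kappa_2\text{ proper}}} t^{asc(\kappa_1)}t^{asc(\kappa_2)}\,\textbf{x}^{\kappa_1}\textbf{x}^{\kappa_2} = \Big(\sum_{\kappa_1\text{ proper}} t^{asc(\kappa_1)}\textbf{x}^{\kappa_1}\Big)\Big(\sum_{\kappa_2\text{ proper}} t^{asc(\kappa_2)}\textbf{x}^{\kappa_2}\Big),
\]
which is exactly $X_{\sdgraph_1}(\textbf{x};t)\,X_{\sdgraph_2}(\textbf{x};t)$. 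I do not anticipate a genuine obstacle here; the only point requiring any care is justifying the interchange of the double sum with the product of the two infinite sums, which is legitimate because these are formal power series of bounded degree in $\textbf{x}$, so each coefficient is determined by a finite computation and the Cauchy-product rearrangement is valid coefficientwise.
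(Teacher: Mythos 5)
Your proof is correct and complete. The paper itself offers no proof of this proposition --- it is introduced as ``an easy statement'' and left to the reader --- and your argument (proper colorings of the disjoint union correspond bijectively to pairs of proper colorings, ascents add over the edge partition $E = E_1 \sqcup E_2$, monomials multiply over the vertex partition, and the resulting double sum factors as a Cauchy product of formal power series) is exactly the standard factorization the authors intend, carried out with appropriate care.
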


\section{The algebra SQSym}
\label{sec:sec3}

	\subsection{Definition, monomial basis and dimension}
Let $\N_0=\N\cup\{0\}=\{0,1,2,\dots\}$.
Let $a,b \in \N_0^r$, then the pair $\lambda=(a,b)$ is called a \textit{bicomposition} if for every $i\in\{1,\dots,r\}$, $(a_i,b_i)\neq (0,0)$. We will use the $2 \times r$ matrix notation for bicompositions $\lambda=\bicomposition$. The \textit{degree} of $\lambda$ is the sum of all its entries, this is: $|\lambda|:=\sum_{i=1}^r (a_i+b_i)$.

\begin{definition}
Given a formal power series in the set of variables $\textbf{x}=(\dots,x_{-1},x_0,x_1,\dots)$ with rational coefficients, $f\in \Q[[\dots,x_{-1},x_0,x_1,\dots]]$, we say that $f$ is a \textit{signed quasisymmetric function} if:
\begin{itemize}
\item $f$ has bounded degree
\item given two sequences of natural numbers $i_1<i_2<\cdots<i_r$, $j_1<j_2<\cdots <j_r$ and a pair $(k, \lambda)$ where $k\in \N_0$ and $\lambda=(a,b)$ is a bicomposition we have that
$$[x_0^k x_{i_1}^{a_1}x_{-i_1}^{b_1}\cdots x_{i_r}^{a_r}x_{-i_r}^{b_r}]f=[x_0^k x_{j_1}^{a_1}x_{-j_1}^{b_1}\cdots x_{j_r}^{a_r}x_{-j_r}^{b_r}]f.$$ 
\end{itemize}
We denote by  $SQSym$ the set of signed quasisymmetric functions and by $SQSym ^d$ the set of all homogeneous signed quasisymmetric functions of degree $d$, where the degree of a monomial $x_0^k x_{i_1}^{a_1}x_{-i_1}^{b_1}\cdots x_{i_r}^{a_r}x_{-i_r}^{b_r}$ is $k+\sum_{i=1}^r(a_i+b_i)$.
\end{definition}
The set $SQSym$ also admits a characterization as invariant under an action (quasisymmetrizing action) of the symmetric group, as done by Hivert in the classical case \cite{Hi00}. For this, given $k\in\N_0$, a bicomposition $\lambda=\bicomposition$ and a set of natural numbers $I=\{i_1,i_2,\dots,i_r\}$ that has been listed in increasing order $i_1<i_2<\cdots <i_r$, we will note $\textbf{x}_I^{(k, \lambda)}:=x_0^k x_{i_1}^{a_1}x_{-i_1}^{b_1}\cdots x_{i_r}^{a_r}x_{-i_r}^{b_r}$. Then a permutation $\sigma$ acts on the monomials
in $\Q[[\dots,x_{-1},x_0,x_1,\dots]]$ as follows 
\begin{equation}
\label{eq:hivert}    
\sigma \ast \textbf{x}_I^{(k, \lambda)} = \textbf{x}_{\sigma I}^{(k, \lambda)}.
\end{equation}
One must be careful to list the elements of $\sigma I$ in increasing order. For instance if $\sigma= 3 1 5 2 4$ $$\sigma \ast x_0^3x_1x_3x_{-3}^2x_{-4}^3 = \sigma \ast \textbf{x}_{\{1,3,4\}}^{3, \big(\begin{smallmatrix}
1 & 1  & 0\\
0 & 2  & 3
\end{smallmatrix}\big)}=\textbf{x}_{\{2,3,5\}}^{3, \big(\begin{smallmatrix}
1 & 1  & 0\\
0 & 2  & 3
\end{smallmatrix}\big)}=x_0^3x_2x_3x_{-3}^2x_{-5}^3.$$

Then we have:
$$SQSym=\{f\in \Q[[\dots,x_{-1},x_0,x_1,\dots]] : f \text{ has bounded degree and } \sigma \ast f = f ~ \forall \sigma \in \mathfrak{S}\}.$$

\begin{remark}
This idea of extending Hivert's quasisymmetrizing action to a double set of variables was used in \cite{ABB04} to study \textit{diagonally quasisymmetric functions}. 
\end{remark}

As $QSym$, the vector space $SQSym$ can also be characterized by a Bergeron-Sottile operator \cite{BS98}. This property allows to quickly prove that it is an algebra as is done in \cite{NST24}.

    \begin{definition}
        Let $f\in \Q[[\dots x_{-1},x_0,x_1,\dots]]$ then for $i\in \N$ define $$R_i(f):=f(\dots x_{-i},0,x_{-(i-1)},\dots,x_{-1},x_0,x_1,\dots,x_{i-1},0,x_{i},\dots)$$ in other words, $R_i(f)$ sets $x_i=x_{-i}=0$ and shifts $x_{j}\mapsto x_{j-1}$ and $x_{-j}\mapsto x_{-(j-1)}$ for $j\geq i+1$.
    \end{definition}
    
    \begin{proposition}\label{prop:Ri-op}
    Let $f\in \Q[[\dots x_{-1},x_0,x_1,\dots]]$ have bounded degree, then $$f\in SQSym \iff R_i(f)=R_{i+1}(f)\quad \forall i\in \N.$$
    \end{proposition}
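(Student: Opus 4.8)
The plan is to first pin down exactly how the operator $R_i$ transforms the coefficients of $f$, and then read off both directions of the equivalence from that single computation.

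First I would describe the action of $R_i$ on coefficients. The substitution defining $R_i$ sets $x_i=x_{-i}=0$, fixes $x_0$, and relabels the surviving variables through the order-preserving index map $\gamma_i:\N\to\N$ given by $\gamma_i(p)=p$ for $p\le i-1$ and $\gamma_i(p)=p+1$ for $p\ge i$, acting simultaneously on a positive index $p$ and its partner $-p$. I would check that this makes $R_i$ a bijection from the set of monomials not involving $x_{\pm i}$ onto the set of all monomials in the relabelled variables, preserving the power $k$ of $x_0$ and the bicomposition $\lambda$. Since each target monomial has a unique preimage, no cancellation occurs, and one obtains the clean identity
\[
[\textbf{x}_J^{(k,\lambda)}]\,R_i(f)=[\textbf{x}_{\gamma_i(J)}^{(k,\lambda)}]\,f
\]
for every increasing sequence $J=\{j_1<\dots<j_r\}\subset\N$, every $k\in\N_0$, and every bicomposition $\lambda$ of length $r$, where $\gamma_i(J)$ is obtained by applying $\gamma_i$ termwise (equivalently, $\gamma_i(J)$ is the increasing sequence one gets by inserting a gap at the value $i$, so that $i\notin\gamma_i(J)$).

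For the forward direction, if $f\in SQSym$ then $[\textbf{x}_I^{(k,\lambda)}]f$ depends only on $(k,\lambda)$ and not on $I$. Applying the identity above for $i$ and for $i+1$ gives
\[
[\textbf{x}_J^{(k,\lambda)}]\,R_i(f)=[\textbf{x}_{\gamma_i(J)}^{(k,\lambda)}]\,f=[\textbf{x}_{\gamma_{i+1}(J)}^{(k,\lambda)}]\,f=[\textbf{x}_J^{(k,\lambda)}]\,R_{i+1}(f),
\]
so $R_i(f)=R_{i+1}(f)$ for all $i$. For the reverse direction I would fix $(k,\lambda)$ of length $r$, write $c(I):=[\textbf{x}_I^{(k,\lambda)}]f$, and translate each hypothesis $R_i(f)=R_{i+1}(f)$ into the relations $c(\gamma_i(J))=c(\gamma_{i+1}(J))$. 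Observe that $\gamma_i(J)$ and $\gamma_{i+1}(J)$ agree unless $i\in J$, in which case they differ only by moving that one entry between the values $i$ and $i+1$. Given any increasing $r$-sequence $I$ and a position where raising one entry of $I$ by one keeps it strictly increasing (say the entry value is $v$, so that $v+1\notin I$), I would take $i=v$ and $J=\gamma_{v+1}^{-1}(I)$; then $\gamma_{v+1}(J)=I$ while $\gamma_v(J)=I'$, the sequence with that entry raised. Hence the hypotheses yield $c(I)=c(I')$ for every elementary $\pm1$ move. A connectivity argument finishes: the graph on strictly increasing positive $r$-sequences whose edges are these $\pm1$ moves is connected, since any sequence can be driven down to $(1,2,\dots,r)$ by repeatedly lowering the first entry that exceeds its position. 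Therefore $c$ is constant on all $r$-sequences, and since $(k,\lambda)$ was arbitrary, $f$ satisfies the defining condition of $SQSym$.

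The routine bookkeeping is the first step (tracking the shifts on the positive side, the negative side, and the spectator variable $x_0$). The genuine content, and the step I expect to be the main obstacle, is the reverse direction: verifying that the relations coming from consecutive $R_i=R_{i+1}$ realize \emph{every} elementary $\pm1$ move — getting the choices $i=v$ and $J=\gamma_{v+1}^{-1}(I)$ right and checking $v+1\notin I$ so that $I$ lies in the image of $\gamma_{v+1}$ — and confirming that these moves connect the whole set of increasing sequences.
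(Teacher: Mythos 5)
Your proposal is correct and takes essentially the same approach as the paper: both directions rest on the observation that $R_i$ acts on coefficients by deleting or inserting the index $i$, so that $R_i(f)=R_{i+1}(f)$ encodes equality of coefficients across an elementary move of a single index by $\pm 1$. The paper finishes the reverse direction by noting that these moves realize the adjacent transpositions $\sigma_i$ of Hivert's quasisymmetrizing action, which generate $\mathfrak{S}$; your connectivity argument on increasing index sequences is precisely that generation statement made explicit, so the two proofs coincide in substance.
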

    
    \begin{proof}
    Let us first suppose that $f\in SQSym$. 
    We consider an infinite matrix $c=$ \small ($\begin{smallmatrix}
    c_1 & c_2 & \cdots \\
    c_{-1} & c_{-2} & \cdots
    \end{smallmatrix}$\small) with only a finite number of nonzero entries, and $k\in \N_0$.
    Let us set $\textbf{x}_{k,c}:= x_0^k\prod_{j\in \N}x_j^{c_j}x_{-j}^{c_{-j}}$.
    Then, for any $i\in \N$, the coefficient in $R_{i+1}(f)-R_{i}(f)$ of the monomial $\textbf{x}_{k,c}$ is the difference of $[\textbf{x}_{k,c'}]f$ and $[\textbf{x}_{k,c''}]f$ with $$c'= \begin{pmatrix}
    c_1 & \cdots & c_{i} & 0&  c_{i+1} & \cdots \\
    c_{-1} & \cdots & c_{-i} & 0 &  c_{-(i+1)} & \cdots
    \end{pmatrix}\text{ and } c''= \begin{pmatrix}
\    c_1 & \cdots & c_{i-1} &0 & c_i \cdots \\
    c_{-1} & \cdots & c_{-(i-1)} & 0&  c_{-i} & \cdots
    \end{pmatrix}.$$ Since $f\in SQSym$ these coefficients must be the same. Whence $R_i(f)=R_{i+1}(f)$.
    
    Conversely, let us suppose that for any $i$, $(R_{i+1}-R_i)f = 0$. This implies that for any $c$ as above and any $k\in \N_0$, one has $[\textbf{x}_{k,c'}]f-[\textbf{x}_{k,c''}]f =
     0$. Then note that a transposition $\sigma_i:=(i,i+1)\in \mathfrak{S}$ acts on monomials under the quasi-symmetrizing action \ref{eq:hivert} as: 
     $$\sigma_i\ast \textbf{x}_{k,e}:=\begin{cases} \textbf{x}_{k, \sigma_i \cdot e}& \text{ if } e_i=e_{-i}=0\text{ or if } e_{i+1}=e_{-(i+1)}= 0\\
    \textbf{x}_{k,e}& \text{ otherwise}
    \end{cases}$$  
    where $\sigma_i \cdot e$ is obtained by permuting the columns $i$ and $(i+1)$ in $e =$ \small ($\begin{smallmatrix}
    e_1 & e_2 & \cdots \\
    e_{-1} & e_{-2} & \cdots
    \end{smallmatrix}$\small). Then either $\textbf{x}_{k,e} = \sigma_i\ast \textbf{x}_{k,e}$ or $\{\textbf{x}_{k,e}, \sigma_i \ast \textbf{x}_{k,e}\}=\{\textbf{x}_{k,c'}, \textbf{x}_{k,c''}\}$ for some $c=$ \small ($\begin{smallmatrix}
    c_1 & c_2 & \cdots \\
    c_{-1} & c_{-2} & \cdots
    \end{smallmatrix}$\small). Thus we deduce that $(id-\sigma_i)\ast f = 0$ for all $i\in \N$ and as $\{\sigma_i\}_{i\in \N}$ generates $\mathfrak{S}$ we have: $f\in SQSym$.
    \end{proof}

	\begin{corollary}
	$SQSym$ is a graded $\Q$-algebra. 
	\end{corollary}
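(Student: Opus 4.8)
The plan is to exploit Proposition \ref{prop:Ri-op} together with the observation that each operator $R_i$ is not merely $\Q$-linear but in fact a $\Q$-algebra homomorphism. Indeed, $R_i$ is induced by a substitution of variables (setting $x_i=x_{-i}=0$ and relabelling $x_j\mapsto x_{j-1}$, $x_{-j}\mapsto x_{-(j-1)}$ for $j\geq i+1$, while fixing $x_0$), and any such substitution is a ring homomorphism. Since the bounded-degree hypothesis guarantees that the product of two such series is again a well-defined bounded-degree series, these series form a commutative $\Q$-algebra $A$, and each $R_i\colon A\to A$ is a morphism of $\Q$-algebras.

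First I would record that $SQSym$ is a $\Q$-subspace of $A$: by Proposition \ref{prop:Ri-op} it equals the common kernel $\bigcap_{i\in\N}\ker(R_{i+1}-R_i)$ of a family of $\Q$-linear maps, hence is a linear subspace. Next, to obtain closure under multiplication, I would take $f,g\in SQSym$; their product $fg$ lies in $A$, and for every $i\in\N$ the homomorphism property gives
\[
R_i(fg)=R_i(f)R_i(g)=R_{i+1}(f)R_{i+1}(g)=R_{i+1}(fg),
\]
where the middle equality uses $f,g\in SQSym$. By Proposition \ref{prop:Ri-op} this shows $fg\in SQSym$, so $SQSym$ is a subalgebra of $A$.

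Finally, for the grading I would note that each $R_i$ preserves the total degree of every monomial, hence commutes with the projection onto homogeneous components. Consequently, if $f=\sum_d f_d$ is the decomposition of $f\in SQSym$ into its homogeneous parts, each $f_d$ still satisfies $R_i(f_d)=R_{i+1}(f_d)$ and thus $f_d\in SQSym^d$; this yields $SQSym=\bigoplus_d SQSym^d$. Since the product of a degree-$a$ monomial and a degree-$b$ monomial has degree $a+b$, one gets $SQSym^a\cdot SQSym^b\subseteq SQSym^{a+b}$, which is exactly the compatibility of the multiplication with the grading.

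I do not expect any genuine obstacle: the whole argument is driven by the single structural fact that the $R_i$ are algebra homomorphisms, after which everything follows formally from Proposition \ref{prop:Ri-op}, in the spirit of the Bergeron--Sottile approach used for $QSym$ in \cite{NST24}. The only point needing a little care is the well-definedness of products in $A$, which the bounded-degree hypothesis secures, since each homogeneous component of $fg$ is a finite sum of products of homogeneous components of $f$ and $g$.
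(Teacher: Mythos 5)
Your proof is correct and follows essentially the same route as the paper: both arguments rest on the multiplicativity of the operators $R_i$, the chain $R_i(fg)=R_i(f)R_i(g)=R_{i+1}(f)R_{i+1}(g)=R_{i+1}(fg)$, and an appeal to Proposition~\ref{prop:Ri-op} to conclude closure under multiplication, with the grading handled by additivity of degrees. Your additional remarks (that $R_i$ is a substitution homomorphism, that $SQSym$ is the common kernel of the maps $R_{i+1}-R_i$, and that $R_i$ preserves homogeneous components) simply make explicit what the paper leaves implicit.
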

	\begin{proof}
	If $f,g\in \Q[[\dots,x_{-1},x_0,x_1,\dots]]$ then 
    $$R_i(f\cdot g) = R_i(f)R_i(g) = R_{i+1}(f)R_{i+1}(g)= R_{i+1}(f\cdot g)\text{ for all } i\in \N.$$ 
    Thus Proposition~\ref{prop:Ri-op} implies that $SQQym$ is stable under multiplication. 
    Also $SQSym= \bigoplus_{d=0}^\infty SQSym^d$ and clearly $f\in SQSym^{d_1}$, $g\in SQSym^{d_2}$ implies $f\cdot g \in SQSym^{d_1+d_2}$.
	\end{proof}

Let $k\in \N_0$ and $\lambda$ be a bicomposition, define the \textit{monomial signed quasisymmetric function} $M_{k, \lambda}$ by
\begin{equation}
M_{k, \lambda}:=\sum_{i_1<i_2<\cdots <i_r} x_0^k x_{i_1}^{a_1}x_{-i_1}^{b_1}\cdots x_{i_r}^{a_r}x_{-i_r}^{b_r}
\end{equation} where the indices $i_1<i_2<\cdots <i_r$ are natural numbers. These functions are the orbit sums of monomials, and they form a basis of $SQSym$ as a vector space. In fact
\begin{equation}
SQSym^d=span_\Q\{M_{k, \lambda} : ~ k+|\lambda|=d \}.
\end{equation}

\begin{example}\label{ex:the-example-bis}
    The chromatic signed quasisymmetric function of Example \ref{ex:the-example} can be expressed as 
    \begin{align*}
    X_{\sdgraph}(\textbf{x};t) = & t^3[M_{0,(\begin{smallmatrix} 1& 1\\0&0
    \end{smallmatrix})}]+t^2[2 M_{0, (\begin{smallmatrix}
        0&1\\1&0
    \end{smallmatrix})}+M_{0,(\begin{smallmatrix} 1& 1\\0&0\end{smallmatrix})} + M_{1,(\begin{smallmatrix}
        1\\0
    \end{smallmatrix})}] \\
    & + t[2 M_{0,(\begin{smallmatrix} 1 & 0 \\ 0 & 1 \end{smallmatrix})} + M_{0,(\begin{smallmatrix} 0&0 \\1 & 1 \end{smallmatrix})}+M_{1,(\begin{smallmatrix}0\\1\end{smallmatrix})}]+[M_{0,(\begin{smallmatrix}
        0 & 0 \\ 1& 1
    \end{smallmatrix})}].
    \end{align*}
\end{example}

Also, as the elements of the monomial basis are indexed by pairs $(k,\lambda)$, we define $A_d:= \dim(SQSym^d)= \# \{(k, \lambda):k\in \N_0, ~ k+|\lambda|=d\}$. This sequence of dimensions, namely $1, 3, 10,34,116\dots$ is \begin{equation}
    A_d=\sum_{k=0}^d\sum_{\substack{c\models d-k\\c=(c_1,\dots,c_p)}}\prod_{i=1}^p (c_i+1).
\end{equation} 
Its generating function is 
\begin{equation}
\sum_{n\geq 0} A_d t^d = \frac{1-t}{1-4t+2t^2}.
\end{equation}
It corresponds to the accumulated sum of \cite{A003480} in OEIS. 

Let us now pay some attention to how to express the product of monomial signed quasisymmetric functions. First we introduce a useful notation for bicompositions; we will represent $\bicomposition = (\lambda_1,\lambda_2,\dots,\lambda_r)$ where 
    $\lambda_i={\big(\begin{smallmatrix}
a_i \\
b_i 
\end{smallmatrix}\big)}$ 
    are the columns of the bicomposition. We also define $(\lambda_1,\dots,\lambda_{r})\cdot(\lambda'_1,\dots,\lambda'_{r'}):= (\lambda_1,\dots,\lambda_r,\lambda_1',\dots,\lambda'_{r'})$ the \textit{concatenation} of two bicompositions.
    
 The \textit{quasi-shuffle} or \textit{stuffle} $\qshuffle$ of bicompositions is the set defined by the following recursive formula. Let $\varnothing$ be the empty composition and $\mathbf{\lambda}=(\lambda_1,\dots,\lambda_r)$, $\mathbf{\lambda'} = (\lambda'_1,\dots,\lambda'_{r'})$
    \[\begin{cases}
        \varnothing \qshuffle \lambda = \lambda \qshuffle \varnothing = \lambda \\
        \lambda \qshuffle \lambda' = \{\lambda_1 \cdot ((\lambda_2,\dots,\lambda_r)\qshuffle \lambda'),\lambda_1' \cdot ((\lambda \qshuffle (\lambda_2',\dots,\lambda_{r'}')), (\lambda_1+\lambda'_1)\cdot ((\lambda_2,\dots,\lambda_r)\qshuffle (\lambda'_2,\dots,\lambda'_{r'}))\}
    \end{cases}\]
\begin{proposition}
$$M_{k_1,\lambda_1} \cdot M_{k_2, \lambda_2} = \sum_{\lambda\in \lambda_1 \qshuffle \lambda_2}M_{k_1+k_2, \lambda}.$$
\end{proposition}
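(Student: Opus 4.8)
The plan is to expand the left-hand side directly as a double sum of monomials and then regroup the terms according to how the two summation index sets overlap. Using the notation $\textbf{x}_I^{(k,\lambda)}$ introduced above, the definition of $M_{k,\lambda}$ reads $M_{k,\lambda}=\sum_I \textbf{x}_I^{(k,\lambda)}$, where $I$ runs over all strictly increasing sequences of positive integers whose length equals the number of columns of $\lambda$. I would therefore write
\begin{equation*}
M_{k_1,\lambda_1}\cdot M_{k_2,\lambda_2}=\sum_{I,J}\textbf{x}_I^{(k_1,\lambda_1)}\,\textbf{x}_J^{(k_2,\lambda_2)},
\end{equation*}
the sum being over pairs $(I,J)$ of increasing sequences, $I$ of length $r$ (the number of columns of $\lambda_1$) and $J$ of length $s$. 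Since the exponents of $x_0$ simply add, each product $\textbf{x}_I^{(k_1,\lambda_1)}\textbf{x}_J^{(k_2,\lambda_2)}$ equals $\textbf{x}_L^{(k_1+k_2,\mu)}$, where $L=I\cup J$ is the union listed in increasing order and $\mu$ is the bicomposition whose column at a position $\ell\in L$ is the relevant column of $\lambda_1$ if $\ell\in I\setminus J$, of $\lambda_2$ if $\ell\in J\setminus I$, and the sum of the two columns if $\ell\in I\cap J$; in the last case the column is still nonzero, so $\mu$ is a genuine bicomposition.

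The key step will be to record, for each pair $(I,J)$, its \emph{overlap pattern}: writing $L=(\ell_1<\cdots<\ell_t)$, I attach to each $\ell_p$ the label $1$, $2$ or $3$ according to whether $\ell_p$ lies in $I\setminus J$, in $J\setminus I$, or in $I\cap J$. Reading off the columns of $\lambda_1$ along the labels in $\{1,3\}$ and the columns of $\lambda_2$ along the labels in $\{2,3\}$, each admissible label word determines precisely one bicomposition $\mu$, and as the word ranges over all admissible patterns the resulting $\mu$ ranges over $\lambda_1\qshuffle\lambda_2$ (regarded as a multiset): the three labels correspond exactly to the three branches of the recursive definition of the quasi-shuffle. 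Conversely, I would show that for a fixed pattern — equivalently, a fixed term $\mu\in\lambda_1\qshuffle\lambda_2$ of length $t$ — the pairs $(I,J)$ realizing it are in bijection with the increasing sequences $L$ of length $t$: given such an $L$, one recovers $I$ as its subsequence at the positions labelled $1$ or $3$, and $J$ at the positions labelled $2$ or $3$, and any $L$ yields a valid pair since a subsequence of an increasing sequence is again increasing.

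Combining these two observations reorganizes the double sum as
\begin{equation*}
M_{k_1,\lambda_1}\cdot M_{k_2,\lambda_2}=\sum_{\mu\in\lambda_1\qshuffle\lambda_2}\ \sum_{L}\textbf{x}_L^{(k_1+k_2,\mu)}=\sum_{\mu\in\lambda_1\qshuffle\lambda_2}M_{k_1+k_2,\mu},
\end{equation*}
since for each fixed $\mu$ the inner sum over increasing sequences $L$ of the appropriate length is $M_{k_1+k_2,\mu}$ by definition. This is the claimed identity.

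I expect the main obstacle to be bookkeeping rather than conceptual: one must verify that the correspondence between overlap patterns and elements of $\lambda_1\qshuffle\lambda_2$ is exact as a multiset, i.e. that it reproduces the three-branch recursion with the correct multiplicities, including the subtle cases where two distinct patterns yield the same bicomposition $\mu$ and must together contribute the right number of copies. The cleanest way to settle this will be an induction on $r+s$ mirroring the recursion defining $\qshuffle$, splitting according to whether the smallest element of $L$ belongs to $I$ only, to $J$ only, or to both, so that the three cases line up term by term with the three sets in the recursive formula.
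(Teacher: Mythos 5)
Your proof is correct, and it supplies exactly the argument the paper leaves out: the paper omits the proof, stating only that it "follows the argument of \cite{Ho00}, \cite{ABB04}", and your expansion into a double sum over index sets $(I,J)$, classification by overlap pattern, and regrouping by the merged sequence $L=I\cup J$ is precisely Hoffman's standard quasi-shuffle argument. Your closing caveat about multiplicities is the right one to flag — the recursion must be read as producing a multiset, and your proposed induction on $r+s$ matching the three labels to the three branches settles it — so this is essentially the same approach, carried out in full.
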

The proof is omitted, it follows the argument of \cite{Ho00},\cite{ABB04}.
\begin{example}
    $$M_{1, (\begin{smallmatrix}
        1&2\\0&1
    \end{smallmatrix})} \cdot M_{2,(\begin{smallmatrix}
        0\\3
    \end{smallmatrix})} = M_{3, (\begin{smallmatrix}
        1&2&0\\0&1&3
    \end{smallmatrix})} +M_{3, (\begin{smallmatrix}
        1 & 0 &2 \\0 & 3&1
    \end{smallmatrix})} +M_{3, (\begin{smallmatrix}
        1&2\\0&3
    \end{smallmatrix})}+M_{3, (\begin{smallmatrix}
        0&1&2\\3&0&1
    \end{smallmatrix})}+M_{3, (\begin{smallmatrix}
        1&2\\3&1
    \end{smallmatrix})}.$$
\end{example}

\begin{remark}
 We may also note that $SQSym$ is in fact a graded bialgebra, hence a Hopf algebra. We shall not develop this point here, we only mention that the coproduct 
 $\Delta: SQSym \rightarrow SQSym \otimes SQSym$ 
 is given by

$$\Delta(M_{k,\lambda}) = \sum_{\lambda_1 \cdot \lambda_2 = \lambda}\sum_{k_1+k_2=k} M_{k_1,\lambda_1}\otimes M_{k_2, \lambda_2}.$$

\end{remark}

\subsection{Signed \texorpdfstring{$(P,\omega)$}{(P,w)}-partitions}
We are now interested in defining a fundamental family for $SQSym$. 
This will be done in the next subsection. We recall that the fundamental basis of $QSym$ is given by the set $\{\Gamma_{P,\omega}:~P\text{ is a chain}\}$, where $\Gamma_{P,\omega}$ is the $(P,\omega)$-partition enumerator, see \cite{S97} for details. In light of this, we are interested in exploring the enumerator of signed $(P,\omega)$-partitions.

The first definition of signed poset was given by Reiner in \cite{R93} in the context of Coxeter groups of type B. Our approach is more similar to that of Gessel or Chow \cite{C01}.

\begin{definition}
\label{def signed permutation}
A permutation $\pi$ of $\Z$ is called a \textit{signed permutation} if $\pi(-i)=-\pi(i)$ for all $i\in \Z$. We denote by $S\mathfrak{S}$ the group of signed permutations with finite support, and by $S\mathfrak{S}_d$ the group of signed permutations of the set $\{0,\pm 1,\dots,\pm d\}$.
\end{definition}

\begin{definition}
A \textit{signed poset} is a poset $P$ with ground set $\pm [d]\backslash \{0\} = \{\pm1,\pm2,\dots,\pm d\}$ such that if $i<_Pj$ then $-j<_P-i$. A \textit{linear extension} $\pi$ of a signed poset $P$ is an element of $S\mathfrak{S}_d$ extending $P$ to a total order, i.e. $i<_Pj$ implies $\pi\inv (i) < \pi \inv (j)$. We will denote by $\mathcal{L}(P)$ the set of linear extensions of $P$.
\end{definition}

We represent $\pi \in \mathcal{L}(P)$ as $\pi=\pi(1)\pi (2)\dots\pi (d)$ and we will note $sgn(\pi)$ the vector of signs $sgn(\pi(1)),sgn(\pi(2)),\dots,sgn(\pi(d))$.

\begin{definition}
    We define the \textit{descent set} of a signed permutation $\pi\in S\mathfrak{S}_d$ as:
    $$DES(\pi):= \{i\in\{0,\dots,d-1\}:~\pi(i)>\pi(i+1)\}$$
    where we note that by definition $\pi(0)=0$.
\end{definition}

\begin{figure}
\includegraphics[width=120mm]{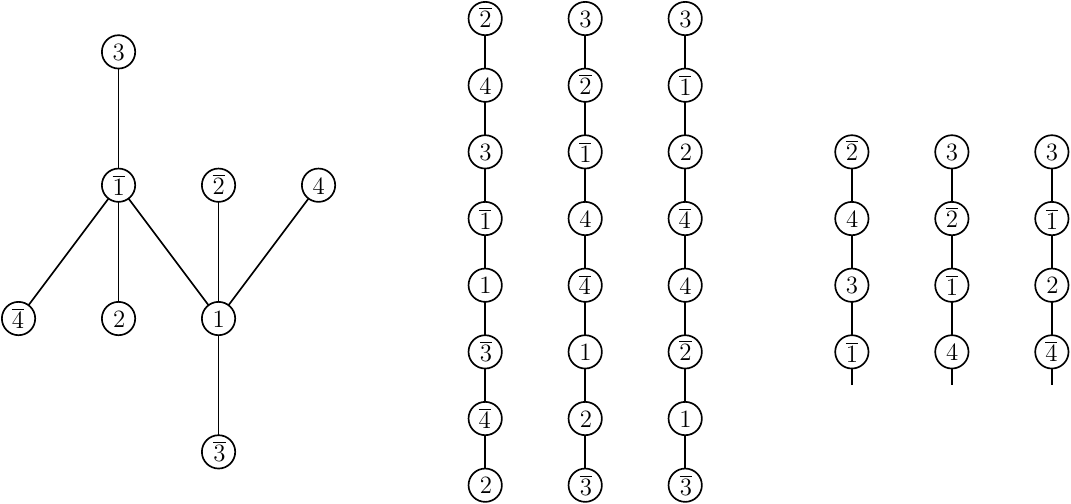}
\caption{The Hasse diagram of a signed poset and three of its linear extensions, that we will note as $\overline{1}34\overline{2}$, $4\overline{12}3$ and $\overline{4}2\overline{1}3$ respectively. When drawing a chain, it is sufficient to specify half of it.}
\end{figure}

\begin{definition}
Let $P$ be a signed poset with ground set $\pm [d]\backslash \{0\}$. A \textit{labeling} of $P$ is a bijection $\omega: \pm [d]\backslash \{0\} \rightarrow \pm[d]\backslash \{0\}$ verifying $\omega(i)<\omega(j)\implies \omega(-j)<\omega(-i)$. A \textit{$(P,\omega)$-partition} is a map $f: P \rightarrow
\Z$ satisfying the conditions:

\begin{itemize}
\item If $x \leq_P y$, then $f(x)\leq f(y)$ ($f$ is order-preserving).
\item If $x\leq_P y$ and $\omega(x)>\omega(y)$, then $f(x)<f(y)$. 
\item $f(x)=-f(-x)$ for all $x\in P$. 
\end{itemize}
We note by $\mathcal{A}(P,\omega)$ the set of $(P,\omega)$-partitions.
\end{definition}

As in the usual case \cite{S97}, a $(P,\omega)$-partition is a map that preserves the order on $P$, where the weakness or strictness of the inequalities is determined by $\omega$.
If $\omega$ is \textit{natural} i.e. $x<y~\implies~\omega(x)<\omega(y)$, then a $(P,\omega)$-partition is just an order-preserving map $f: P \rightarrow \Z$ verifying $f(x)=-f(-x)$ for all $x \in P$. We then call $f$ simply a $P$-partition. Similarly, if $\omega$ is \textit{dual natural} i.e. $x<y~\implies~\omega(x)>\omega(y)$, then a $(P,\omega)$-partition is a strict order-preserving map $f: P \rightarrow \Z$ verifying $f(x)=-f(-x)$ for all $x \in P$, we then call $f$ a strict $P$-partition.

It is clear that we can have several $\omega$'s on a poset $P$ leading us to the same $\mathcal{A}(P, \omega)$. If we have a covering relation $x \lessdot y$ we will draw a double line in the Hasse diagram if $\omega(x)>\omega(y)$ and a simple line if $\omega(x)<\omega(y)$ and then represent a $(P,\omega)$-partition by a Hasse diagram in which we have simple or double lines to represent weak or strict inequalities, see Figure \ref{figure: omega-hasse} as example.

\begin{figure}[ht]

\includegraphics[width=110mm]{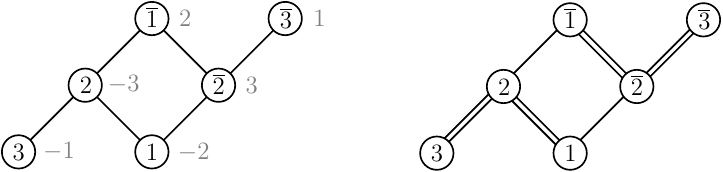}
\caption{
\label{figure: omega-hasse}
The inequalities induced by the $\omega$ on the left are illustrated in the diagram on the right.}
\end{figure}

As a direct consequence from Stanley's result \cite[Lemma 6.1]{S72}  on usual $(P,\omega)$-partitions we have the following result.

\begin{proposition}
Let $\mathcal{L}(P,\omega)$ be the set of linear extensions of 
$P$, where the labeling is inherited from the labeling $\omega$ of $P$. Then
\label{prop:stanley-Pp}
    \begin{equation}
\mathcal{A}(P,\omega)=\bigsqcup_{\pi \in \mathcal{L}(P,\omega)} \mathcal{A}(\pi, \omega)
\end{equation} where $\sqcup$ denotes disjoint union.
\end{proposition}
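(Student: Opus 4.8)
The plan is to reduce this statement about signed $(P,\omega)$-partitions to Stanley's classical decomposition, which is explicitly cited as the input. The essential idea is that Proposition~\ref{prop:stanley-Pp} asserts that the set of all $(P,\omega)$-partitions decomposes as a disjoint union, over the linear extensions $\pi$ of $P$, of the $(\pi,\omega)$-partitions. I would first unwind the definitions carefully. A $(P,\omega)$-partition is a map $f:P\to\Z$ that is order-preserving on $P$, strict across covering relations where $\omega$ decreases, and \emph{antisymmetric} in the sense that $f(x)=-f(-x)$. The key observation is that this last antisymmetry condition, together with the defining symmetry of a signed poset ($i<_P j \implies -j<_P -i$) and of the labeling $\omega$, is preserved when passing to each chain $\pi$. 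So I want to show that the antisymmetric constraint does not interfere with Stanley's argument and simply ``comes along for the ride.''

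First I would set aside the antisymmetry condition and recall Stanley's lemma in its original form: for an ordinary labeled poset, the set of $(P,\omega)$-partitions is the disjoint union over linear extensions $\pi\in\mathcal{L}(P,\omega)$ of the sets $\mathcal{A}(\pi,\omega)$, where $\pi$ is regarded as a chain (totally ordered set) carrying the induced labeling. The mechanism is that any order-preserving map $f:P\to\Z$ determines a total preorder on $P$ refining $<_P$; breaking ties according to $\omega$ produces a unique linear extension $\pi$ such that $f$ is a $(\pi,\omega)$-partition, and conversely every $(\pi,\omega)$-partition is a $(P,\omega)$-partition since $\pi$ refines $P$. This gives the disjointness (each $f$ lands in exactly one block) and the covering (every $f$ lands in some block).

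Then I would verify that this bijective assignment $f\mapsto\pi$ respects the signed structure. The point is that if $f$ satisfies $f(x)=-f(-x)$, then the total order it induces is automatically \emph{symmetric}: the relation $f(x)\le f(y)$ is equivalent to $f(-y)\le f(-x)$, so the refining linear extension $\pi$ can be chosen to be a \emph{signed} linear extension, i.e. an element of $S\mathfrak{S}_d$, precisely because the signed poset $P$ and the labeling $\omega$ already satisfy their own symmetry conditions ($i<_Pj\implies -j<_P-i$ and $\omega(i)<\omega(j)\implies\omega(-j)<\omega(-i)$). Conversely, for a linear extension $\pi$ that \emph{is} a signed permutation, any $(\pi,\omega)$-partition inherits the antisymmetry automatically. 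Thus the classical decomposition restricts cleanly: intersecting Stanley's disjoint union with the antisymmetric maps selects exactly the blocks indexed by signed linear extensions, and these are the $\mathcal{A}(\pi,\omega)$ appearing on the right-hand side.

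The main obstacle I anticipate is the bookkeeping of the symmetry: one must check that the tie-breaking procedure used to produce $\pi$ can always be carried out \emph{compatibly with the involution} $i\mapsto -i$, so that the resulting $\pi$ genuinely lies in $S\mathfrak{S}_d$ rather than being merely an ordinary linear extension that happens to respect the underlying set. This requires using the antisymmetry of $f$ (which forces $f(x)$ and $f(-x)$ to be negatives, hence never equal unless $f(x)=0$) and treating the value $0$ and the origin $\pi(0)=0$ with care, consistently with the descent convention $\pi(0)=0$ fixed in the definition of $DES$. Once the symmetric tie-breaking is shown to be well-defined and involutive, the disjointness and the covering of all antisymmetric $(P,\omega)$-partitions follow directly from Stanley's lemma, and the proposition is established.
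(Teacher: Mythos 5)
Your proposal is correct and takes essentially the same route as the paper: the paper proves this proposition simply by declaring it a direct consequence of Stanley's result \cite[Lemma 6.1]{S72}, which is exactly the reduction you perform. The extra work you do --- checking that the tie-breaking linear extension associated to an antisymmetric $f$ is automatically a signed permutation, thanks to $f(x)=-f(-x)$ together with the symmetry conditions on $P$ and $\omega$ --- is precisely the detail the paper leaves implicit, and your verification of it is sound.
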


\begin{definition}
Let $P$ be a signed poset and $\omega$ a signed labeling of $P$. We define the \textit{$(P,\omega)$-partition enumerator} to be the function 
\begin{equation}
\Gamma_{P, \omega}=\Gamma_{P,\omega}(\dots,x_{-1},x_0, x_1,\dots)=\sum_{f\in \mathcal{A}(P, \omega)}x^f=\sum_{f \in \mathcal{A}(P, \omega)}x_{f(1)}x_{f(2)}\cdots x_{f(d)}.
\end{equation}
\end{definition}

\begin{remark}
Note that the definition does not depend on the absolute value of the labels on the poset but it depends on where are the negative labels placed. In this sense, we can represent the $(P,\omega)$-partition enumerator of Figure \ref{figure: omega-hasse} as follows.
\begin{figure}[ht]
\includegraphics[width=40mm]{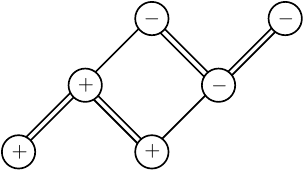}
\end{figure}
\end{remark}

Proposition \ref{prop:stanley-Pp} translates as follows on $\Gamma$ enumerators:
\begin{equation}\label{eq: relation-enumerators}
\Gamma_{P, \omega}=\sum_{\pi \in \mathcal{L}(P,\omega)}\Gamma_{\pi, \omega}.
\end{equation}

\begin{remark}
We observe that:
$\Gamma_{P,\omega}\in SQSym$.
\end{remark}

\subsection{The fundamental family of \texorpdfstring{$SQSym$}{SQSym}}
The aim of this subsection is to present a fundamental generating family $\F$ for the algebra $SQSym$. As we shall see, $\F$ is not a basis. 
We will show how to extract a basis $\bar\F$ from $\F$,
but it is relevant in our context to deal with $\F$ rather than with $\bar\F$ for several reason: $\F$ is canonical since it is defined as the enumerators of the signed labeled chains; $\F$ is more suited to deal with products since it allows a positive expansion for the product of its elements; and we shall see in the next section that it is the appropriate framework to compute an expansion for the chromatic signed quasisymmetric invariant.

\begin{definition}
    As in the classical case for $QSym$, we define the {\em fundamental family} $\F$ of $SQSym$ as the set of enumerators of chains: $$\mathcal{F}:= \{\Gamma_{P,\omega}:~ P \text{ is a signed chain, } \omega \text{ is a signed labeling of }P\}.$$ 
\end{definition}

As said before, there exist linear relations between the elements of $\F$.
\begin{proposition}
\label{prop: the relation}
There is the following relation between enumerators of $(P,\omega)$-partitions:
\begin{figure}[h]
\includegraphics[width=55mm]{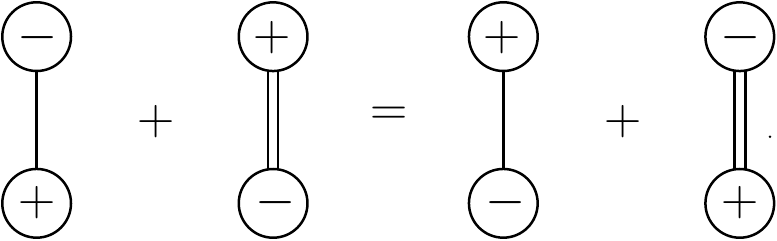}
\end{figure}
\end{proposition}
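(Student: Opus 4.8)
The plan is to prove the relation by unwinding both sides into their defining sums over signed $(P,\omega)$-partitions and exhibiting a weight-preserving disjoint-union decomposition of the underlying partition sets. Since by the remark above each $\Gamma_{P,\omega}$ lies in $SQSym$, and since equality in $SQSym$ can be tested coefficient-by-coefficient on monomials, it suffices to match the sets $\mathcal{A}(P,\omega)$ together with the weight $x^f=\prod_{i=1}^{d}x_{f(i)}$. Thus no appeal to Proposition~\ref{prop:stanley-Pp} is needed; the argument is a direct bijection on partitions.

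First I would set up notation for a signed chain. Writing it as $c_1<_P c_2<_P\cdots<_P c_{2d}$, the antisymmetry $i<_P j\Rightarrow -j<_P -i$ forces $c_{2d+1-k}=-c_k$ for all $k$, so the chain is determined by its bottom half $c_1,\dots,c_d$, and the covering relation $c_d\lessdot c_{d+1}=-c_d$ is the unique edge of the chain joining an element to its own negative. This central cover is exactly the spot where the chains in the statement differ. For any $f\in\mathcal{A}(P,\omega)$, the symmetry condition $f(x)=-f(-x)$ applied here gives $f(c_d)\le f(-c_d)=-f(c_d)$; hence $f(c_d)\le 0$ when the central cover is weak and $f(c_d)\le -1$ when it is strict. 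Isolating this local constraint is the heart of the matter.

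Next I would decompose the partition set of the chain whose central cover is weak according to the value of $f(c_d)$:
\[
\mathcal{A}(P^{\mathrm{weak}},\omega)=\{f:f(c_d)<0\}\ \sqcup\ \{f:f(c_d)=0\}.
\]
The first block is, by the observation above, exactly $\mathcal{A}(P^{\mathrm{strict}},\omega)$, contributing $\Gamma_{P^{\mathrm{strict}},\omega}$. In the second block $f(c_d)=f(-c_d)=0$; deleting the two central elements produces a signed chain $\widetilde P$ on a ground set of size $2(d-1)$, and $f\mapsto f|_{\widetilde P}$ is a bijection onto $\mathcal{A}(\widetilde P,\widetilde\omega)$. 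Since exactly one of $c_d,-c_d$ is a positive label, each such $f$ contributes a single factor $x_0$ beyond the weight of its restriction, so this block contributes $x_0\,\Gamma_{\widetilde P,\widetilde\omega}$. Summing the three weights yields the displayed relation.

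The main obstacle is the bookkeeping of the labeling under contraction: I must check that the weak/strict markings that $\widetilde\omega$ induces on the surviving covers of $\widetilde P$ are precisely those inherited from $\omega$, and in particular that the new central cover of $\widetilde P$ receives the correct marking without introducing spurious strictness. This is a local, finite verification relying on the defining inequality of a signed labeling, $\omega(i)<\omega(j)\Rightarrow\omega(-j)<\omega(-i)$, which guarantees that the restricted data is again a valid signed labeling and that the comparison of $\omega$-values across each surviving cover is unchanged. Once this is confirmed, the three partition sets match bijectively with matching weights, and the relation holds in $SQSym$ coefficient-by-coefficient.
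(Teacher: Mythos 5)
Your bijective decomposition is internally sound, but it proves a different identity from the one asserted in the proposition. You guessed that the chains in the figure differ only in the weak/strict marking of the central cover $c_d\lessdot -c_d$, and that the relation reads $\Gamma_{P^{\mathrm{weak}},\omega}=\Gamma_{P^{\mathrm{strict}},\omega}+x_0\,\Gamma_{\widetilde P,\widetilde\omega}$. That identity is true, but it is not the relation of this proposition. The relation here is a four-term linear relation among enumerators of signed chains \emph{of the same length}, in which the two displayed vertices carry opposite signs and the relation exchanges those signs while toggling the connecting edge between weak and strict; in the $F_S^{\epsilon}$ notation of the next subsection, its smallest instance is
\begin{equation*}
F_{\varnothing}^{-}-F_{\{0\}}^{-}=F_{\varnothing}^{+}-F_{\{0\}}^{+},
\end{equation*}
both sides being the ``diagonal'' contribution $x_0$. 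Three features of the paper pin this down. First, the paper's proof writes an evident equality $\Gamma_{P,\omega}=\Gamma_{P,\omega'}$ (one signed poset, two labelings differing only on an incomparable pair) and expands both sides by \eqref{eq: relation-enumerators}; such an expansion produces only full-length chain enumerators, so a term like $x_0\,\Gamma_{\widetilde P,\widetilde\omega}$, involving a shorter chain, can never appear. Second, Remark \ref{rem:F-rel} and the remark following the basis proof describe rewriting ``the first chain \dots as a linear combination of the other three'' whenever a $-$ sign sits directly above a $+$ sign across a weak edge: four chains, and a rule driven by the sign pattern. Third, your relation mentions no signs at all and applies only at the central edge, so it could not be iterated to sort signs inside weak subchains, which is exactly the role this proposition plays in extracting the basis $\overline{\mathcal{F}}$ from $\mathcal{F}$.

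The constructive part of your argument is salvageable, and in fact it is half of a correct proof. What you isolate, the set $\{f: f(c_d)=f(-c_d)\}$, is precisely $\Gamma_{\mathrm{weak}}-\Gamma_{\mathrm{strict}}$, and the key observation your write-up omits is that this diagonal contribution is \emph{invariant under exchanging the $+$ and $-$ of the displayed pair}: at the center it equals $x_0\,\Gamma_{\widetilde P,\widetilde\omega}$ for either sign configuration, and for a non-central pair with opposite signs it equals $\sum_{i}x_ix_{-i}\cdot(\text{common factor})$, again sign-symmetric. Running your decomposition once for each of the two sign configurations and subtracting eliminates the diagonal term and yields the stated four-term relation; that would be a legitimate alternative to the paper's route via Proposition \ref{prop:stanley-Pp}. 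As written, however, the proposal establishes a statement the proposition does not make, and it would not support the later use of the relation.
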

\begin{proof}
Consider the following evident relation between enumerators of $(P,\omega)$-partitions where $\omega$ is indicated on the left of each element and apply \ref{eq: relation-enumerators} on both sides of the equality.

\begin{figure}[h]
\includegraphics[width = 80mm]{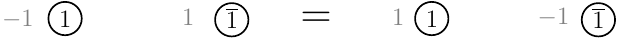}
\end{figure}
\end{proof}

\begin{remark}\label{rem:F-rel}
We notice that the relation in Proposition \ref{prop: the relation} holds as a "pattern relation": it is true for any common chain above and below the two vertices appearing in this relation. See Figure \ref{fig: relation-extended} as an example. 
\begin{figure}
\caption{\label{fig: relation-extended}}
\includegraphics[width = 80mm]{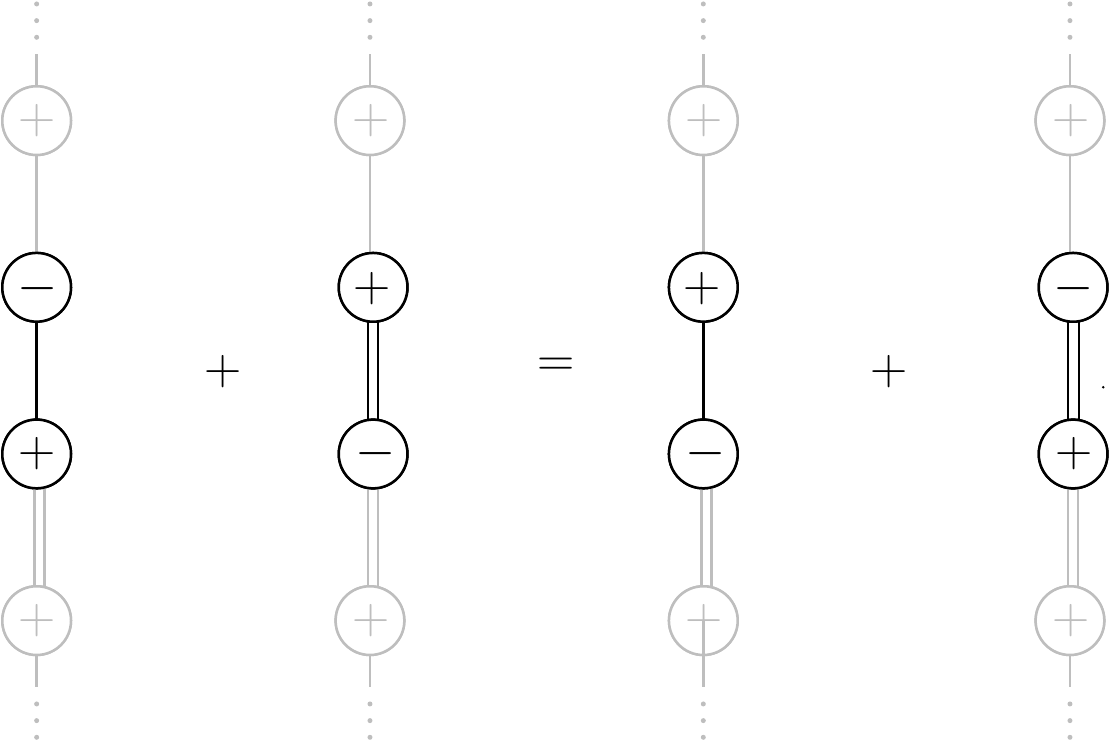}
\end{figure}
\end{remark}

\begin{remark}
 The family $\mathcal{F}$ appears in \cite{W97} in a quite different context: not as $(P,\omega)$-partitions of signed posets, but instead to establish an analogue of Stanley's celebrated result for counting the acyclic orientations of a given graph with a fixed number of sinks.
\end{remark}

Now, let us show how to extract a basis from $\mathcal{F}$. 

We introduce some notation to refer to the elements of $\mathcal{F}$. Let us fix $d\in \N_0$, $S = \{s_1<s_2<\cdots\}\subset\{0,...,d-1\}$ and $\epsilon = (\epsilon_1,...,\epsilon_d)\in \{+,-\}^d$. We define $F_{S}^\epsilon$ as the $(P,\omega)$-partition enumerator of the labeled signed chain of length $2d$ whose strict edges correspond to $S$ and whose signs are given by $\epsilon$. More precisely, the signed chain corresponding to $F_S^\epsilon$ is obtained by drawing its upper half (of size $d$) and then, from bottom to top, by assigning the signs of  $\epsilon$ to the elements of the chain and place strict edges at the positions specified by $S$. If $s_1=0$ the edge that joins the two halves of the chain will be strict, if $m\in S$ with $m>0$ the $m$'th edge of the upper half of the chain will be strict.

\begin{example}
Let $d=4$ then $F_{\{1,3\}}^{(+++-)}$, $F_{\{1,2\}}^{(-++-)}$, $F_{\{0,2\}}^{(--++)}$ and $F_{\{0,3\}}^{(-+-+)}$ are respectively the $(P,\omega)$-partition enumerators of the chains in Figure \ref{fig: example-chains}.
\end{example}

\begin{figure}[h]
\includegraphics[width=105mm]{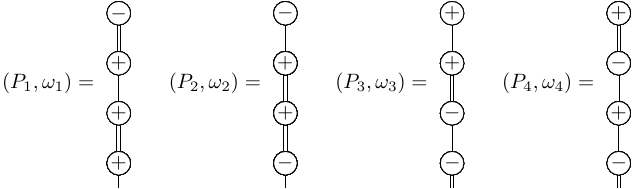}
\caption{\label{fig: example-chains}}
\end{figure}

Thus $F_S^\epsilon$ has the following form 
\begin{equation}
F_S^\epsilon = \sum_{\substack{0\leq i_1\leq \cdots \leq i_d\\i_j<i_{j+1}\text{ if }i\in S\\ 0< i_1 \text{ if } 0\notin S}} x_{\epsilon_1 i_1} x_{\epsilon_2i_2}\cdots x_{\epsilon_3i_3}.
\end{equation}

Now we are interested in extracting a basis $\overline{\mathcal{F}}$ from $\mathcal{F}$. The elements of $\overline{\mathcal{F}}$ will be the $(P,\omega)$-partition enumerators of a particular type of chains, which we will call {\em minimal chains}. 

When regarding a chain we will focus to the weak subchains (that is, subchains whose edges are all weak). 

\begin{definition}
A chain is said to be {\em minimal} if it avoids, 
within a weak subchain, to have a $-$ sign placed above a $+$ sign.

The family $\overline{\mathcal{F}}$ is defined as the subset of ${\mathcal{F}}$ corresponding to minimal chains.
\end{definition}

Note that if we restrict our attention to the upper half of the chain, the condition for a chain to be minimal translates into the following:
\begin{enumerate}
\item any weak subchain must have its signs sorted with the $-$ below and the $+$ above; 
\item if the edge connecting the two halves of the chain is weak, then the first subchain (in the upper half) must have only $+$ signs.
\end{enumerate}

For instance, in Figure \ref{fig: example-chains} $(P_1,\omega_1)$ and $(P_3,\omega_3)$ are minimal, $(P_2,\omega_2)$ and $(P_4,\omega_4)$ are not.

\begin{proposition}
The family $\overline{\mathcal{F}}$ is a basis of $SQSym$.
\end{proposition}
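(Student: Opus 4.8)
The plan is to prove that $\overline{\mathcal{F}}^d$ is a basis of $SQSym^d$ by showing that its transition matrix to the monomial basis $\{M_{k,\lambda}:k+|\lambda|=d\}$ is unitriangular for a suitable ordering. Since $\{M_{k,\lambda}\}$ is already a basis and $\dim SQSym^d=A_d$, it is enough to attach to each minimal chain a \emph{leading} monomial function, to check that the leading-term assignment is a bijection onto the index set $\{(k,\lambda):k+|\lambda|=d\}$, and to verify that all remaining terms of each $F_S^\epsilon$ are strictly smaller in the order.

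First I would expand a general $F_S^\epsilon$ in the monomial basis. Given a valid filling $0\le i_1\le\cdots\le i_d$, let $k$ be the number of leading indices equal to $0$ (these must be the first $k$, forcing the edges $0,1,\dots,k-1$ to be weak), and read off from the strictly positive indices the bicomposition $\lambda$, where maximal runs of equal value are grouped and the signs $\epsilon$ record the two rows. Collecting fillings by their type yields $F_S^\epsilon=\sum M_{k,\lambda}$ with nonnegative integer coefficients, the sum ranging over the admissible $(k,\lambda)$. Two features are crucial: within a weak subchain the indices may either stay equal (coarse) or strictly increase (finer, more columns in $\lambda$), and the $x_0$-exponent $k$ can be any value from $0$ up to the size of the initial weak subchain.

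Next I would isolate the leading term by maximizing $k$ and then taking the coarsest $\lambda$ (fewest columns). If the central edge is strict then $i_1>0$ always, so $k=0$ and the leading term is $M_{0,\lambda}$ with $\lambda$ the bicomposition of $d$ recording the blocks; these are in bijection with the pairs having $k=0$. If the central edge is weak, minimality forces the first block $B_1$ to carry only $+$ signs, so the maximal $k$ equals $c_1:=|B_1|\ge 1$, attained by sending all of $B_1$ to $0$; the leading term is then $M_{c_1,\lambda'}$, where $\lambda'$ is the bicomposition of $d-c_1$ recording $B_2,\dots,B_m$, and these are in bijection with the pairs having $k\ge 1$. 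Using minimality (signs sorted inside each weak subchain, and $B_1$ all $+$) I would check that the leading-term map is injective, hence a bijection onto $\{(k,\lambda):k+|\lambda|=d\}$; in particular $|\overline{\mathcal{F}}^d|=A_d$. A direct inspection shows the leading coefficient is exactly $1$.

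Finally, ordering the pairs by decreasing $k$ and, for equal $k$, by the refinement order (coarser $\lambda$ larger), every non-leading monomial of $F_S^\epsilon$ is strictly smaller: reducing $k$ lowers the primary key, and any further splitting of a weak subchain refines $\lambda$. Hence the transition matrix is unitriangular, therefore invertible, so $\overline{\mathcal{F}}^d$ is a basis of $SQSym^d$; taking the direct sum over $d$ proves the statement (note that this simultaneously reproves that $\mathcal{F}$ spans, without invoking the straightening relations of Proposition \ref{prop: the relation}). The main obstacle is the interaction between the $x_0$-exponent $k$ and the refinement of the weak subchains: one must set up the term order so that both decreasing $k$ and refining $\lambda$ count as going downward, and verify that the minimality conditions pin down exactly one chain per index $(k,\lambda)$ — this is precisely where condition (2) on the block $B_1$ is essential to prevent collisions of leading terms.
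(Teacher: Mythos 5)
Your proof is correct and takes essentially the same route as the paper's: the same bijection between minimal chains and pairs $(k,\lambda)$ with $k+|\lambda|=d$, combined with a leading-term argument on the expansion of $F_S^\epsilon$. The paper formulates the triangularity as distinctness of lex-leading monomials for the variable order $x_0>x_1>x_{-1}>x_2>x_{-2}>\cdots$ together with the dimension count, while you formulate it as unitriangularity of the transition matrix to the monomial basis (ordering by decreasing $k$, then refinement of $\lambda$); these are interchangeable versions of the same idea.
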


\begin{proof}
We begin by showing that the number of elements of degree $d$ in $\overline{\mathcal{F}}$ is equal to $A_d$, the dimension of $SQSym^d$. To do this, we construct a bijection between minimal chains of length $2d$ and the set $\{(k, \lambda) : k + |\lambda| = d\}$, which indexes the monomial basis in degree $d$. 

Given a minimal chain $(P, \omega)$, we define its corresponding pair $(k, \lambda)$ as follows. If the edge that joins the two halves of the chain is strict then $k = 0$. If it is weak, then $k$ is the number of elements before the next strict edge. Then $\lambda = \bicomposition$ where $a_i$ and $b_i$ are  respectively the number of $+$ and $-$ signs in the $i$'th subchain of weak edges in the upper half of the chain. This assignment can easily be seen to define a bijection between pairs $(k,\lambda)$ and minimal chains.

To show that the elements in $\overline{\mathcal{F}}$ are linearly independent, we consider the lexicographic order induced by the variable order $x_0>x_1>x_{-1}>x_{2} >x_{-2}>\cdots$. It is clear that the leading monomial of the elements of $\overline{\mathcal{F}}$ are all distinct, 
whence the linear independence. 

\end{proof}

\begin{example} For the chains of Figure \ref{fig: example-chains}, the pairs $(k, \lambda)$ corresponding to $(P_1,\omega_1)$ and $(P_3,\omega_3)$ are $(1,\big(\begin{smallmatrix}
2 & 0\\
0 & 1
\end{smallmatrix}\big))$ and $(0, \big(\begin{smallmatrix}
0 & 2\\
2 & 0
\end{smallmatrix}\big))$ respectively. 
 \end{example}

\begin{remark}
We may also prove directly that $\overline{\mathcal{F}}$ generates $\mathcal{F}$. To do this, we observe that Remark \ref{rem:F-rel} 
implies that each time a $-$ sign appears directly above a $+$ sign connected by a weak edge, we may express the first chain in Figure \ref{fig: relation-extended} as a linear combination of the other three.  By iterating this process we can express every signed chain as a linear combination of minimal chains.
\end{remark}

As in the classical case for $QSym$, the fundamental family $\F$
gives a simple and elegant way to express the product in $SQSym$.
This is based on the fact that the product of two $(P,\omega)$-partition enumerators is the enumerator of the disjoint union of the two posets. As a consequence, we get a {\em positive} expansion for the product of elements of $\F$.

To explicit this, we need the following notion of shuffle.
Let us consider two pairs of words
$(\pi,\epsilon) = (\pi_1\dots\pi_{d},\epsilon_1\dots\epsilon_{d})$
and
$(\pi',\epsilon') = (\pi'_1\dots\pi'_{d'},\epsilon'_1\dots\epsilon'_{d'})$.
The (simultaneous) shuffle 
$(\pi,\epsilon) \shuffle (\pi',\epsilon')$
is defined recursively by:
\[\begin{cases}
        \varnothing \shuffle (\pi,\epsilon) = (\pi,\epsilon) \shuffle \varnothing = (\pi,\epsilon) \\
        (\pi,\epsilon) \shuffle (\pi',\epsilon') = 
        \{
        (\pi_1,\epsilon_1) \cdot ((\pi_2\dots\pi_{d},\epsilon_2\dots\epsilon_d) \shuffle (\pi',\epsilon'))
        ,\\
        \ \ \ \ \ \ \ \ \ \ \ \ \ \ \ \ \ \ \ \ \ \ \ \ 
        (\pi'_1,\epsilon'_1) \cdot ((\pi,\epsilon) \shuffle (\pi'_2\dots\pi'_{d'},\epsilon'_2,\dots\epsilon'_{d'}))
        \}
\end{cases}\]
where the concatenation $\cdot$ is meant simultaneously in $\pi$ and $\epsilon$. Given a signed permutation $\pi \in S\mathfrak{S}_{d'}$ and $d\geq 0$, $\overline{\pi}^d$ is defined as the word $$\overline{\pi}^d = \begin{cases}
    \overline{\pi}^d_i = \pi_i +d \text{ if } \pi_i>0\\
    \overline{\pi}^d_i = \pi_i - d \text{ if } \pi_i<0
\end{cases} \text{ for } i = 1,\dots,d'.$$ This way, given two signed permutations $\pi\in S\mathfrak{S}_d$ and $\pi'\in S\mathfrak{S}_{d'}$ the words $\pi$ and $\overline{\pi}^d$ do not have letters in common.

\begin{proposition}
Given $d,d'\geq 0$, $\epsilon\in \{\pm\}^d$, $\epsilon'\in  \{\pm\}^{d'}$ the product in the $\F$ family is given by:
$$F_{S}^{\epsilon} \cdot F_{S'}^{\epsilon'} = \sum_{(\tau,\delta) \in (\pi,\epsilon) \shuffle (\overline{\pi}'^{d},\epsilon')} F_{DES(\tau)}^{\delta},$$
where  $\pi \in S\mathfrak{S}_d$ and $\pi'\in S\mathfrak{S}_{d'}$ are chosen such that $DES(\pi)=S$,
$DES(\pi')=S'$.

\end{proposition}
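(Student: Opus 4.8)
The plan is to reduce the product of two fundamental enumerators to the enumerator of a disjoint union of two signed labeled chains, and then to re-expand that disjoint union back into the fundamental family using the decomposition of $(P,\omega)$-partition enumerators into linear extensions from \eqref{eq: relation-enumerators}. The key structural fact I would invoke is that, since $(P,\omega)$-partitions enumerate order-preserving maps and the product of generating functions counts pairs, one has $\Gamma_{P,\omega}\cdot\Gamma_{P',\omega'}=\Gamma_{P\sqcup P',\,\omega\sqcup\omega'}$ for the disjoint union of two signed posets, with the labeling on the second factor shifted (in absolute value) so that the two ground sets are disjoint; this is exactly what the operation $\overline{\pi}'^{\,d}$ encodes on the level of signed permutations. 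Concretely, $F_S^\epsilon=\Gamma_{C,\omega}$ where $C$ is a signed chain, and a chain is nothing but a one-element linear extension of itself, so choosing $\pi\in S\mathfrak{S}_d$ with $DES(\pi)=S$ realizes $F_S^\epsilon=\Gamma_{\pi,\epsilon}$ as the enumerator attached to a single signed permutation carrying the sign vector $\epsilon$.

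First I would make precise the identification $F_S^\epsilon=\Gamma_{\pi,\epsilon}$ for a signed permutation $\pi$ with $DES(\pi)=S$: the strict edges of the chain sit exactly at the descents of $\pi$, and the sign vector $\epsilon$ records where the negative labels lie, which by the earlier remark is all that $\Gamma$ depends on. Next I would apply the disjoint-union identity to write
\begin{equation*}
F_S^\epsilon\cdot F_{S'}^{\epsilon'}=\Gamma_{\pi,\epsilon}\cdot\Gamma_{\overline{\pi}'^{\,d},\epsilon'}=\Gamma_{\,\{\pi\}\sqcup\{\overline{\pi}'^{\,d}\}\,,\,\epsilon\,\epsilon'},
\end{equation*}
where the right-hand side is the enumerator of the disjoint union of the two signed antichains-as-chains with the relabeling $\overline{\pi}'^{\,d}$ guaranteeing disjoint letter sets. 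Then I would apply Proposition \ref{prop:stanley-Pp} in its enumerator form \eqref{eq: relation-enumerators} to this disjoint union: its set of linear extensions is exactly the set of signed permutations obtained by interleaving the two words $\pi$ and $\overline{\pi}'^{\,d}$ while preserving the internal orders, which is precisely the simultaneous shuffle $(\pi,\epsilon)\shuffle(\overline{\pi}'^{\,d},\epsilon')$. For each such $(\tau,\delta)$ in the shuffle, the descent set $DES(\tau)$ dictates the strict edges and $\delta$ the signs, so $\Gamma_{\tau,\delta}=F_{DES(\tau)}^{\delta}$, yielding the stated sum.

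The main obstacle will be justifying that the linear extensions of the disjoint union of two \emph{signed} posets are enumerated by the simultaneous shuffle of the two signed words, together with its sign vectors. In the unsigned setting this is the classical statement that linear extensions of a disjoint union of two chains are shuffles; in the signed world one must check that the signed-poset constraint $i<_P j\implies -j<_P-i$ is automatically respected by every shuffle of $\pi$ and $\overline{\pi}'^{\,d}$, and that the induced labeling on each interleaving is consistent on $\pm[d+d']\setminus\{0\}$. I would verify this by noting that a signed permutation is determined by its restriction to positive positions together with signs, so a shuffle of the positive halves, carried along with the sign data, recovers a genuine element of $S\mathfrak{S}_{d+d'}$; the relabeling $\overline{\pi}'^{\,d}$ is exactly the device that makes the two halves order-compatible after interleaving. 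Once this combinatorial identification of linear extensions with the simultaneous shuffle is in place, the remaining steps are bookkeeping, matching $DES(\tau)$ to strict edges and $\delta$ to sign placement.
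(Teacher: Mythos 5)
Your proposal is correct and takes essentially the same route as the paper: the paper's proof consists precisely of the observation (stated in the surrounding prose) that the product of two $(P,\omega)$-partition enumerators is the enumerator of the disjoint union of the posets, followed by an appeal to Proposition \ref{prop:stanley-Pp} via \eqref{eq: relation-enumerators}. Your write-up additionally spells out the identification of linear extensions of the disjoint union of two signed chains with simultaneous shuffles of their positive halves (the relabeling $\overline{\pi}'^{\,d}$ keeping the letter sets disjoint), a step the paper leaves implicit.
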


\begin{proof}
It is a direct consequence of the proposition \ref{prop:stanley-Pp}.
\end{proof}

\begin{example}
As an example, we compute the product of $F_{\{1\}}^{+-}$ and $F_{\{0\}}^{++}$. We choose $\pi = 21$ and $\pi' = \overline{1}2$.

\begin{align*}
    (\pi,-+)\shuffle (\overline{\pi '}^2, ++)  = & (21,-+)\shuffle (\overline{3}4,++) 
    = \{(21\overline{3}4,-+++),(2\overline{3}41,-+++),\\ &(\overline{3}421,++-+), (\overline{3}241,+-++), (\overline{3}214,+-++),(2\overline{3}14,-+++)\}
\end{align*}

And thus we obtain $$F_{\{1\}}^{+-} \cdot F_{\{0\}}^{++} = F_{\{1\}}^{-+++}+F_{\{1,3\}}^{-+++}+F_{\{0,2,3\}}^{++-+}+F_{\{0,3\}}^{+-++} + F_{\{0,2\}}^{+-++} + F_{\{1\}}^{-+++}.$$
\end{example}

\begin{remark}
    As the family $\F$ is not a basis, the expansion obtained in the proposition is not unique and it depends on the choice of $\pi$ and $\pi'$.
\end{remark}

\section{An expansion in the fundamental family}
\label{sec:sec4}

The aim of this section is to express $X_{\sdgraph}(\textbf{x};t)$ in the fundamental generating family $\mathcal{F}$. 

If $\sdgraph = (\Sigma, \tau)$ is an acyclic directed signed graph on $d$ vertices, we can associate to it a signed poset $P_\tau$ on $2d$ vertices. A similar procedure is described in \cite{ADM22} for the unsigned case. For this, consider the directed symmetric graph $G_{\sdgraph}$ with vertex set $\{u_1,u_{-1},\dots,u_d,u_{-d}\}$, then consider in $\{\pm1,\dots,\pm d\}$ the order relation $i\preceq j$ if there is a directed  path from $u_j$ to $u_i$. Clearly $i \preceq j$ if and only if $-j\preceq -i$. In other words, draw $G_{\sdgraph}$ so that all the edges point upwards on the page; removing the arrows results in a Hasse diagram of a signed poset, possibly with some redundant edges. We will note $P_\tau$ the resulting poset, see Figure \ref{fig: correspondance-graphe-poset}.

\begin{figure}
    \centering
    \includegraphics[width=0.8\linewidth]{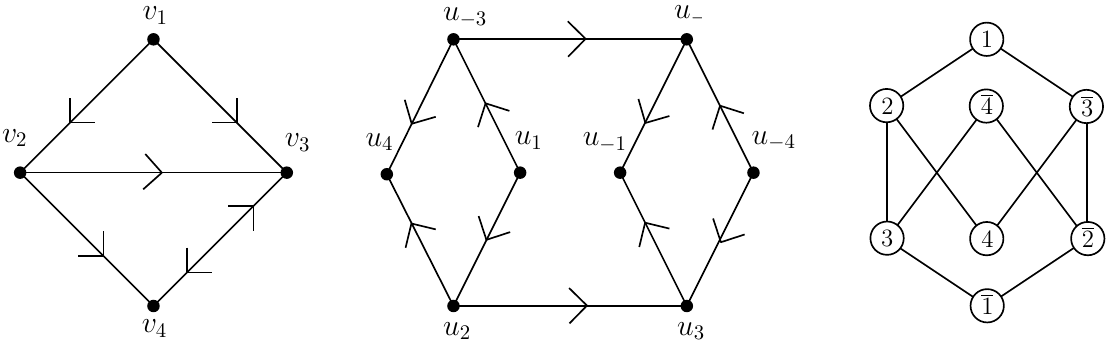}
    \caption[Example-of-a-signed-graph]{\label{fig: correspondance-graphe-poset}Example of a signed graph $\protect\overrightarrow{\Sigma}$, its corresponding symmetric graph $\protect G_{\overrightarrow{\Sigma}}$ and the Hasse diagram of the associated poset $\protect P_\tau$.}
\end{figure}

\begin{definition}
    Let $\pi\in S\mathfrak{S}_d$ and $\sdgraph = (\Sigma, \tau)$ a directed  signed graph on $d$ vertices.
    \begin{itemize}
        \item The {\em $(\Sigma, \pi)$-rank} of $m\in \{\pm1,\dots,\pm d\}$ is the length $\ell$ of the longest subword $\pi_{i_1}\pi_{i_2}\cdots \pi_{i_\ell}$ of $\pi_{-d}\cdots\pi_{-1}\pi_1\cdots\pi_d:= \pi(-d)\cdots \pi(-1)\pi(1)\cdots \pi(d)$ such that $\pi_{i_\ell} = m$ and $\{u_{\pi_{i_j}},u_{\pi_{i_{j+1}}}\}$ is an edge of the symmetric graph $G_\Sigma$ for all $j = 1,2,\dots,\ell-1$. We say that $\pi$ has a $\Sigma$-descent at $i$ with $i\in \{0,1,\dots,d-1\}$ if either of the following conditions holds:
        \begin{itemize}[label = $\ast$]
            \item $rank_{(\Sigma,\pi)}(\pi_i)<rank_{\Sigma, \pi}(\pi_{i+1})$
            \item $rank_{(\Sigma,\pi)}(\pi_i)=rank_{\Sigma, \pi}(\pi_{i+1})$ and $\pi(i)<\pi(i+1)$
        \end{itemize}
        We note $DES_\Sigma(\pi)$ the set of $(\Sigma,\pi)$-descents.
    \item A {\em $\sdgraph$-inversion} of $\pi$ is a directed  edge $(u_i,u_j)$ of $G_{\sdgraph}$ such that $\pi\inv(i)<\pi\inv(j)$, i.e. $i$ precedes $j$ in $\pi$. We define $inv_{\sdgraph}(\pi)$ as half the number of $\sdgraph$-inversions of $\pi$ (if $(u_i,u_j)$ is a $\sdgraph$-inversion of $\pi$ then $(u_{-j},u_{-i})$ too).
    \end{itemize} 
\end{definition}

Before presenting the main theorem of this section,  we observe that given a signed graph $\Sigma$ with $d$ vertices, every signed permutation $\pi\in S\mathfrak{S}_d$ induces an acyclic orientation $\tau_\pi$ on $\Sigma$. Let $\pi=\pi(-d)\cdots\pi(-1)\pi(1)\cdots \pi(d)$ and consider an edge $e = \{u_i,u_j\}$ of $G_{\Sigma}$, then orient $e$ from $u_i$ to $u_j$ if $\pi\inv(i)>\pi\inv(j)$, that is, if $j$ precedes $i$ in $\pi$. This is a signed orientation since $\pi\inv(-i)= -\pi\inv(i)$ and it is clearly acyclic.

\begin{theorem} Given a directed signed graph $\sdgraph = (\Sigma,\tau)$ on $d$ vertices then $$X_{\sdgraph}(\textbf{\em{x}};t) = \sum_{\pi\in S\mathfrak{S}_d}t^{inv_{\sdgraph}(\pi)}F^{sgn(\pi)}_{DES_{\Sigma}(\pi)}.$$
\end{theorem}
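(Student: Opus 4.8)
The plan is to establish this expansion by partitioning the proper colorings of $\sdgraph$ according to which signed permutation $\pi \in S\mathfrak{S}_d$ records the relative order of the colors, and then showing that each block of this partition contributes exactly one fundamental term $t^{inv_{\sdgraph}(\pi)} F^{sgn(\pi)}_{DES_\Sigma(\pi)}$. Concretely, a proper coloring $\kappa$ assigns values in $\Z$ to $v_1,\dots,v_d$; passing to the symmetric graph, the symmetric coloring $\tilde\kappa$ assigns values to $u_{\pm 1},\dots,u_{\pm d}$ with $\tilde\kappa(u_{-i}) = -\tilde\kappa(u_i)$. Sorting these $2d$ values in weakly increasing order (breaking ties consistently) produces a signed permutation $\pi$, together with a chain of weak and strict inequalities. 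The first step is to verify that the map $\kappa \mapsto \pi$ is well-defined and that, conversely, the fiber over each $\pi$ is precisely the set of symmetric colorings respecting the total preorder encoded by $\pi$, i.e. exactly the solutions counted by a labeled signed chain.

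Next I would identify the fiber data with a fundamental enumerator. For a fixed $\pi$, the signs $sgn(\pi) = (sgn(\pi(1)),\dots,sgn(\pi(d)))$ record whether each successive color value lands on a positive or negative variable, which is exactly the $\epsilon$ datum defining $F^\epsilon_S$. The strict-versus-weak pattern along the chain is governed by the adjacency structure of $G_\Sigma$: two consecutive entries $\pi(i),\pi(i+1)$ must receive strictly increasing color values precisely when the corresponding vertices are adjacent in the symmetric graph, since a shared edge forbids equal colors. This is where the notion of $(\Sigma,\pi)$-rank enters: the rank stratifies the vertices so that a descent in $DES_\Sigma(\pi)$ marks exactly the positions where strictness is forced (either because adjacency raises the rank, or because adjacency at equal rank forces a strict order by the tie-breaking convention). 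I would argue that the strict set of the resulting chain equals $DES_\Sigma(\pi)$, so the fiber enumerator is $F^{sgn(\pi)}_{DES_\Sigma(\pi)}$, summed over the variable values consistent with the chain — which is exactly the defining sum for this fundamental element.

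It then remains to match the exponent of $t$. Recall $asc(\kappa)$ counts edges $e = \{v_i,v_j\}$ with $\tau(v_i,e)\kappa(v_i) + \tau(v_j,e)\kappa(v_j) > 0$, i.e. edges on which $\kappa$ fails to be compatible with $\tau$. I would show that this count is constant on each fiber and equals $inv_{\sdgraph}(\pi)$. Using the symmetric picture, an edge is counted by $asc$ exactly when the induced acyclic orientation $\tau_\pi$ disagrees with $\tau$ on that edge; translating through the definition, a directed edge $(u_i,u_j)$ of $G_{\sdgraph}$ contributes iff $\pi\inv(i) < \pi\inv(j)$, which is precisely a $\sdgraph$-inversion. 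Since inversions come in symmetric pairs $(u_i,u_j)$ and $(u_{-j},u_{-i})$, dividing by two gives $inv_{\sdgraph}(\pi)$, matching the definition. Summing the fiber contributions over all $\pi \in S\mathfrak{S}_d$ yields the claimed formula.

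The main obstacle I anticipate is the bookkeeping in the second step: proving rigorously that the forced-strictness pattern of the chain coincides with $DES_\Sigma(\pi)$, and in particular that the rank-based definition of $\Sigma$-descent correctly captures both the adjacency-forces-strictness condition and the tie-breaking among non-adjacent equal-rank vertices. One must be careful that the tie-breaking convention used to define $\pi$ from $\kappa$ is exactly the one encoded by the second bullet of the $\Sigma$-descent definition (comparing $\pi(i)$ and $\pi(i+1)$ at equal rank), and that this is consistent with the $x_0 > x_1 > x_{-1} > \cdots$ type ordering implicit in the fundamental family. Verifying that no color configuration is double-counted or omitted — i.e. that the preorder-to-chain correspondence is a genuine bijection onto the monomials of $F^{sgn(\pi)}_{DES_\Sigma(\pi)}$ — is the delicate point where the acyclicity hypothesis and the signed-poset structure must be invoked.
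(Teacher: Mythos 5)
Your global strategy --- fibering the proper colorings over $S\mathfrak{S}_d$, identifying each fiber with a labeled signed chain, and matching the $t$-exponent by showing $asc(\kappa)$ is constant on fibers and equal to $inv_{\sdgraph}(\pi)$ --- has the right shape, and your last step (inversions come in symmetric pairs, ascents are the edges where $\tau_\pi$ disagrees with $\tau$) agrees with the paper's. The genuine gap is the central claim of your second step: that consecutive entries $\pi(i),\pi(i+1)$ must receive strictly increasing colors \emph{precisely when} $u_{\pi(i)}$ and $u_{\pi(i+1)}$ are adjacent in $G_\Sigma$. Adjacency does force a $\Sigma$-descent (adjacency raises the $(\Sigma,\pi)$-rank by at least one), but the converse is false: $DES_\Sigma(\pi)$ strictly contains the adjacency positions, since a descent also occurs between \emph{non-adjacent} consecutive entries whenever the rank increases, or when the ranks are equal and $\pi(i)<\pi(i+1)$. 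If strictness is imposed only at adjacencies, the fibers overlap and the right-hand side overcounts. Concretely, let $\Sigma$ be the path $v_1 - v_2 - v_3$ with both edges positive (any orientation $\tau$), and take the proper coloring $\kappa(v_1)=\kappa(v_3)=1$, $\kappa(v_2)=2$. Since $u_1$ and $u_3$ are not adjacent, adjacency-only strictness allows $f(u_1)=f(u_3)$ in the chains of both $\pi=132$ and $\pi'=312$ (one-line notation $\pi(1)\pi(2)\pi(3)$), so the monomial $x_1^2x_2$ is counted twice, whereas it occurs once in $X_{\sdgraph}$. The actual definition of $DES_\Sigma$ makes position $1$ strict for $132$ (equal ranks, $1<3$) and weak for $312$ (equal ranks, $3>1$), which is exactly what restores disjointness.

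What is missing is the device that produces these extra strict edges, and it is the heart of the paper's proof: group the colorings by the acyclic orientation $\tau'$ they are compatible with, identify them with strict $P_{\tau'}$-partitions, and apply the linear-extension decomposition $\mathcal{A}(P,\omega)=\bigsqcup_{\pi\in\mathcal{L}(P,\omega)}\mathcal{A}(\pi,\omega)$ with a \emph{specific} order-reversing labeling $\omega_{\tau'}$, namely the one that lists elements by increasing rank in $P_{\tau'}$ and, within a fixed rank, assigns larger labels to smaller original labels. This choice is precisely what gives $DES(\omega_{\tau_\pi}\pi)=DES_\Sigma(\pi)$: the strictness pattern of each fiber is dictated by $\omega$, not by adjacency. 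Equivalently, in your sorting language, the tie-break among equal-colored vertices must order them by decreasing rank (then by decreasing original label), and since the rank is computed in the poset of the chamber containing $\kappa$, the tie-breaking rule depends on the coloring itself; it is not a fixed ``consistent'' convention, and in particular it has nothing to do with the ordering $x_0>x_1>x_{-1}>x_2>\cdots$, which the paper uses only in the leading-monomial argument for the independence of $\overline{\mathcal{F}}$. You correctly flagged this identification as the delicate point, but the resolution you proposed would not work; supplying the rank-based labeling (or an equivalent chamber-dependent tie-breaking rule) is exactly what is needed to close the gap.
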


\begin{proof}
The proof is greatly inspired from that of Theorem 3.1 in \cite{E17}. Note $AO(\Sigma)$ the set of acyclic orientations of a signed graph $\Sigma$ and $C(\tau)$ the set of proper colorings of $\Sigma$ that are compatible with the orientation $\tau$, then \eqref{equation: suma-signed-chambers} can be rewritten as $$X_{\sdgraph}(\textbf{x};t) = \sum_{\tau' \in AO(\Sigma)}t^{asc_{\sdgraph}(\tau')}\sum_{\kappa\in C(\tau')}\textbf{x}^\kappa$$ where $asc_{\sdgraph}(\tau')$ is the number of edges of $\Sigma$ in which $\tau$ and $\tau'$ have opposite directions; that is, $asc_{\sdgraph}(\tau') = asc(C)$, with  $C$ the chamber induced by $\tau'$. Now, note that $\kappa \in C(\tau')$ if and only if $\kappa$ is a strict $P_{\tau'}$-partition, i.e. if $\omega_{\tau'}$ is an order-reversing labeling of $P_{\tau'}$ then $\kappa\in C(\tau')\iff \kappa$ is a $(P_{\tau'},\omega_{\tau'})$-partition. Then $$\sum_{\kappa\in C(\tau')}\textbf{x}^\kappa = \Gamma_{(P_{\tau'},\omega_{\tau'})} = \sum_{\pi\in \mathscr{L}(P_{\tau'},\omega_{\tau'})}\Gamma_{\pi, \omega_{\tau'}} = \sum_{\pi\in \mathscr{L}(P_{\tau'},\omega_{\tau'})}F_{DES(\pi)}^{sgn(\omega\inv_{\tau'}\pi)}.$$

Let $e: P_{\tau'}\rightarrow \{\pm1,\dots\pm d\}$ be the identity map, i.e. the map that sends each element of $P_{\tau'}$ to its original label. Then $\mathcal{L}(P_{\tau'},e)$ is the set of linear extension of $P_{\tau'}$ with its original labeling $e$. For $\pi\in \mathcal{L}(P_{\tau'},e)$, let $\omega_{\tau'}\pi$ denote the product of $\omega_{\tau'}$ and $\pi$ in $S\mathfrak{S}_d$, then we have $\omega_{\tau'}\pi\in\mathcal{L}(P_{\tau'},\omega_{\tau'})\iff \pi\in\mathcal{L}(P_{\tau'},e)$. Also, as every signed permutation $\pi\in S\mathfrak{S}_d$ induces an acyclic orientation $\tau_\pi$ on $\Sigma$ and there is a bijection between $S\mathfrak{S}_d$ and the linear extensions of acyclic orientations of $\Sigma$ we have that
$$X_{\sdgraph}(\textbf{x};t) = \sum_{\tau' \in AO(\Sigma)}t^{asc_{\sdgraph}(\tau')}\sum_{\pi\in \mathcal{L}(P_{\tau'},e)}F_{DES(\omega_{\tau'}\pi)}^{sgn(\pi)} = \sum_{\pi\in S\mathfrak{S}_d}t^{asc_{\sdgraph}(\tau_\pi)}F_{DES(\omega_{\tau_\pi}\pi)}^{sgn(\pi)}.$$

 We have taken $\omega_{\tau_\pi}$ to be any order reversing map for $P_{\tau_\pi}$, we will now chose a specific one. Let $rank_\tau(m)$ be the rank \footnote{Although the poset $P$ is not necessarily ranked, by the rank of an element $m$ we mean the length of the shortest chain from $m$ to a minimal element of $P$.} of the element $m$ in the poset $P_\tau$, we assign a label from $\{\pm 1, \dots,\pm d\}$ to the elements of $P_{\tau_\pi}$, starting with those of rank 0. If multiple elements have rank 0, we label them according to the original label $e$ from smallest to largest. The labeling begins at $d$ and proceeds in descending order down to $-d$. We then continue with the elements of rank 1, followed by those of rank 2, and so on. This is clearly an order-reversing map.
 
 Note that for all $m\in \{\pm 1, \dots, \pm d\}$, $rank_{(\Sigma, \pi)}(m) = rank_{\tau_\pi}(m)$. Then $m\in DES(\omega_{\tau_\pi})\iff \omega(\pi(m))>\omega(\pi(m+1))$, if and only if one of the following conditions hold:
 \begin{itemize}
     \item $rank_{\tau_\pi}(m)<rank_{\tau_\pi}(m+1)$,
     \item $rank_{\tau_\pi}(m)=rank_{\tau_\pi}(m+1)$ and $\pi(m)<\pi(m+1)$. 
\end{itemize}
So we have $DES(\omega_{\tau_\pi}) = DES_\Sigma(\pi)$. Also, as $asc_{\sdgraph}(\tau_\pi)$ is the number of edges in which $\tau_\pi$ does not match the original orientation $\tau$ on $\Sigma$, it is not difficult to see that $asc_{\sdgraph}(\tau_\pi) = inv_{\sdgraph}(\pi)$, and we get the desired formula.
\end{proof}

\begin{example}\label{ex:F-dev}
    Consider $\sdgraph = (\Sigma, \tau)$ the directed signed graph of Example \ref{ex:the-example}. Then the statistics for each of the $\pi\in S\mathfrak{S}_2$ are listed in Table \ref{table: tabla-ejemplo-deux-sommets}. Thus $X_{\sdgraph}(\textbf{x};t)$ is expressed as
    \begin{align*}
        X_{\sdgraph}(\textbf{x};t) = & t^3 F_{\{0,1\}}^{++} + t^2 [F_{\{0,1\}}^{++} + F_{\{0,1\}}^{-+} +  F_{\{1\}}^{-+}] + t [2 F_{\{0,1\}}^{+-}+ F_{\{1\}}^{--}]+ F_{\{0,1\}}^{--}.
    \end{align*}
This coincides with the formula given in Example \ref{ex:the-example-bis}
\end{example}

\begin{table}[h]
\caption{
\label{table: tabla-ejemplo-deux-sommets}
Table for Example~\ref{ex:F-dev}
}
\begin{tabular}{|c|c|c|c|c|}
\hline
$\pi$ & $inv_{\overrightarrow{\Sigma}(\pi)}$ & $sgn(\pi)$ & $(\Sigma,\pi)-rank$ & $DES_\Sigma(\pi)$ \\ \hline
$\overline{21}12$ & 3 & ++ & 1234 & \{0,1\} \\
$\overline{12}21$ & 2 & ++ & 1223 & \{0,1\} \\
$2\overline{1}1\overline{2}$ & 1 & $+-$ & 1234 & \{0,1\} \\
$2\overline{1}1\overline{2}$ & 2 & $-+$ & 1223 & \{1\} \\
$\overline{2}1\overline{1}2$ & 2 & $-+$ & 1234 & \{0,1\} \\
$1\overline{2}2\overline{1}$ & 1 & $+-$ & 1223 & \{0,1\} \\
$21\overline{12}$ & 0 & $--$ & 1234 & \{0,1\} \\
$12\overline{21}$ & 1 & $--$ & 1223 & \{0\} \\ \hline
\end{tabular}
\end{table}

\section{Symmetry}
\label{sec:sec5}

In this section, we study the (signed) symmetry of $X_{\sdgraph}(\textbf{x};t)$. That is, we want to know for which directed signed graphs $\sdgraph$, one has $X_{\sdgraph}(\textbf{x};t)\in SSym[t]$. In other words, given $\sdgraph = (V, E, \sigma, \tau)$ we want to know whether the coefficient of $t^a$, for $a\in\{0,\dots,|E|\}$, is invariant under the action of $S\mathfrak{S}_{|V|}$ on its variables $\textbf{x} = (\dots,x_{-1},x_0,x_1,\dots)$.

The similar question for (unsigned) directed graphs has been investigated in \cite{E17}, where it is proven that the chromatic quasisymmetric function is symmetric for a class of directed graphs named proper circular arc digraphs. In \cite{GPS24} they study the same question for $X_{G}(x;t)$ where $G$ is a labeled graph and $X_{G}(x;t)$ is the invariant introduced in \cite{SW16}. This corresponds to restricting to the case of {\em acyclic} directed graphs. We will see in Proposition \ref{prop:sym-no1} that this case is not interesting in the context of signed graphs. 

We show that for directed signed graphs, the symmetry implies much stronger constraints. On the one hand, Theorem \ref{theo:ssym} exhibits a class of directed signed graphs for which we prove the symmetry. But on the other hand, Proposition \ref{prop:sym-no1}
and Remark \ref{rem:sym-no2} show that we cannot expect a much wider framework where symmetry holds.

One first observation is that if $\dgraph = (V,E)$ is a directed graph such that its chromatic quasisymmetric function is symmetric, this is $X_{\dgraph}(x;t)\in Sym[t]$, then consider $\sdgraph = (V,E,\sigma, \tau)$ the directed signed graph obtained by considering all edges of $\dgraph$ as positive edges and $\tau$ being the original orientation in $\dgraph$ then $X_{\sdgraph}(\textbf{x};t)\in SSym[t]$. To see this it suffices to consider $X_{\dgraph}(x;t)$ in the set of variables $\textbf{x} = (\dots,x_{-1},x_0,x_1,\dots)$ and the condition of invariance under all permutations implies the invariance under signed permutations.

\begin{proposition}
\label{prop:sym-no1}
Let $\sdgraph = (V,E,\sigma,\tau)$ be a directed signed graph having $\sigma(e) = -$ for some $e\in E$ and $\tau$ be an acyclic orientation, then $X_{\sdgraph}(\textbf{\em{x}};t)\notin SSym[t]$.
\end{proposition}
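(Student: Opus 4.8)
The plan is to show that $X_{\sdgraph}(\textbf{x};t)$ fails to be invariant under a single, explicit signed permutation; since $SSym$ is by definition invariant under all of $S\mathfrak{S}$, exhibiting one group element and one power of $t$ that are not preserved already suffices. Because each coefficient of $X_{\sdgraph}$ lies in $SQSym$, it is invariant under permutations of the levels, so the only thing that can fail is invariance under a sign swap $\phi_c$ (the element of $S\mathfrak{S}$ exchanging $x_c \leftrightarrow x_{-c}$ and fixing every other variable). I would work with the coefficient of $t^0$, which equals $\sum_{\kappa}\textbf{x}^\kappa$ over proper colorings $\kappa$ that are compatible with $\tau$ (the ascent-free colorings, i.e. the lattice points of the open chamber $C_\tau$, which is nonempty since $\tau$ is acyclic). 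The coefficient of a monomial $\textbf{x}^{\mathfrak{m}}$ in $[t^0]X_{\sdgraph}$ is exactly the number of proper compatible colorings whose multiset of colors is $\mathfrak{m}$, so it is enough to produce a multiset $\mathfrak{m}$ and a value $c$ for which this count differs from the count attached to $\phi_c(\mathfrak{m})$.

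The mechanism that forces such a discrepancy is the negative edge. Fix a negative edge $e_0=\{u,v\}$; after possibly negating all colors (the global sign change is itself an element of $S\mathfrak{S}$ and so does not affect membership in $SSym$) we may assume $e_0$ is introverted, so that compatibility together with properness on $e_0$ reads $\kappa(u)+\kappa(v)<0$. The key simplification is to restrict attention to colorings whose colors have \emph{super-increasing} magnitudes (each magnitude larger than the sum of all smaller ones). Then compatibility on a positive edge becomes the usual requirement that the color decrease along the orientation, while compatibility on a negative edge becomes a pure sign condition: on an introverted edge the endpoint of larger magnitude must be negative. Under this normalization, comparing the counts for $\mathfrak{m}$ and $\phi_c(\mathfrak{m})$ reduces to comparing the number $N(s)$ of compatible assignments as a function of the sign pattern $s$ of the chosen magnitudes, and $\phi_c$-invariance of $[t^0]X_{\sdgraph}$ becomes the statement that $N(s)$ is unchanged when the sign attached to $c$ is flipped.

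A transparent witness of non-constancy is the two-color specialization: color every vertex by either $+c$ or $-c$. The compatible proper colorings then correspond to the vertex subsets $S$ (those receiving $+c$) subject to the edge constraints ``source of a positive edge in $S$ and its sink outside'', ``both endpoints of an introverted edge outside $S$'', and ``both endpoints of an extroverted edge inside $S$''. When these are consistent, the subset-counting generating function in $|S|$ is $z^{f}(1+z)^{m}$, where $f$ is the number of vertices forced into $S$ and $m$ the number of free (isolated) vertices; since $\phi_c$ sends $|S|\mapsto d-|S|$, invariance is equivalent to this polynomial being palindromic, i.e. to the number of vertices forced into $S$ equalling the number forced out. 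The introverted edge $e_0$ forces $u$ and $v$ out of $S$, tilting this balance.

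The main obstacle is precisely that other edges also force vertices in or out, and these contributions can cancel the imbalance created by $e_0$, or render the two-color constraints inconsistent (so that the plain two-color comparison collapses to $0=0$ and yields nothing). To overcome this I would refine the picture to super-increasing \emph{distinct} magnitudes and track the effect of flipping the single largest magnitude $A_d$: writing the count as a sum over which vertex occupies the top magnitude, the difference $N(s)-N(s^{(d)})$ collects, with opposite signs, the completion counts of the vertices whose incident edges force the top sign to $+$ versus to $-$. An endpoint of the introverted edge $e_0$ can only sit at the top with a negative sign, so it contributes only to the ``$-$'' side; the task is then to choose the remaining magnitudes and signs, using the acyclicity of $\tau$ to realize a compatible completion (for instance by placing an endpoint of $e_0$ at the unique largest magnitude), so that the ``$+$'' side cannot match it, forcing $N(s)\neq N(s^{(d)})$. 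Alternatively one can set up an induction on the number of vertices, peeling off a source or a sink supplied by acyclicity while retaining $e_0$, which isolates the contribution of $e_0$ and avoids the global cancellation.
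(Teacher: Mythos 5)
Your proposal correctly identifies the right object to examine (the constant term in $t$, i.e.\ the lattice points of the chamber $C_\tau$, which exists because $\tau$ is acyclic) and the right mechanism (the negative edge $e_0$ forcing $\kappa(u)+\kappa(v)<0$ on compatible proper colorings). But it is not a proof: the decisive step --- showing that two coefficient counts actually differ --- is never carried out. You explicitly acknowledge the obstacle (``other edges also force vertices in or out, and these contributions can cancel the imbalance created by $e_0$, or render the two-color constraints inconsistent'') and then describe overcoming it only as a ``task'', offering two unexecuted strategies (flipping the top magnitude, or induction peeling off a source or sink). Neither is developed to a checkable argument; in particular the assertion that ``the `$+$' side cannot match'' the contribution of an endpoint of $e_0$ is exactly what needs proof, and nothing in the proposal rules out the cancellations you yourself raise. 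There is also a flawed reduction at the outset: membership of the coefficients in $SQSym$ does \emph{not} give invariance under permutations of the levels (quasisymmetric invariance is strictly weaker than symmetric invariance in the pairs $(x_i,x_{-i})$; for instance $\sum_{i<j}x_ix_j^2$ lies in $SQSym$ but is not level-permutation invariant), so the claim that ``the only thing that can fail is invariance under a sign swap'' is unjustified as stated --- though for proving non-membership this does not matter, since exhibiting any one failing element of $S\mathfrak{S}$ suffices.

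The gap is avoidable, and the paper's proof shows how: instead of a single swap $x_c\leftrightarrow x_{-c}$, use the global sign reversal. The negative edge gives the hyperplane $\{\xi_i=-\xi_j\}\in\mathscr{H}_\Sigma$, which separates the points with all coordinates positive from those with all coordinates negative; the chamber $C$ corresponding to $\tau$ is connected and disjoint from this hyperplane, so it cannot contain lattice points of both kinds. Picking $\alpha\in C\cap\Z^d$ with coordinates of pairwise distinct absolute values, the monomials $x_{|\alpha_1|}\cdots x_{|\alpha_d|}$ and $x_{-|\alpha_1|}\cdots x_{-|\alpha_d|}$ both lie in the $S\mathfrak{S}$-orbit of $x_{\alpha_1}\cdots x_{\alpha_d}$, which has positive coefficient in $[t^0]X_{\sdgraph}$, yet at most one of the two can have a nonzero coefficient there. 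This one-line separation argument eliminates precisely the cancellation problem that your single-swap, edge-by-edge bookkeeping cannot control.
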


\begin{proof}
Say $e = \{i,j\}$, then $\{x_i = -x_j\}\in \mathscr{H}_\Sigma$, and this hyperplane separates the points having all positive coordinates from those having all negative coordinates. As the orientation in $\sdgraph$ is acyclic there exists a chamber $C$ corresponding to this orientation.
This chamber corresponds to the constant term (in $t$) of 
$X_{\sdgraph}(\textbf{x};t)$. In this chamber we can find an integer point $\alpha \in \Z^d$ whose coordinates are all different in absolute value leading to the monomial $x_{\alpha_1}\cdots x_{\alpha_d}$. But, as we cannot find two integer points in $C$ leading to the monomials $x_{|\alpha_1|}\cdots x_{|\alpha_d|}$ and $x_{-|\alpha_1|}\cdots x_{-|\alpha_d|}$ respectively, the constant term of $X_{\sdgraph}(\textbf{x};t)$ is not signed symmetric. Thus $X_{\sdgraph}(\textbf{x};t)\notin SSym[t]$.
\end{proof}

Now we present a family of directed signed graphs, which are a particular case of the \textit{circular indifference directed  graphs} defined in \cite{E17}.

\begin{definition}
    Let $k<d-1\in \N$ then define $\overrightarrow{C}_{d,k} = (V,E)$ to be the directed  graph with vertex set $V = \{v_1,\dots,v_d\}$ and (directed) edge set $$E = \{(v_i,v_{i+p (mod~d)}): ~ i\in\{1,\dots,k\} ~ p\in \{1,\dots,k\}\}.$$
\end{definition}

These graphs are indifference graphs which are invariant under rotation. We will see in Remark \ref{rem:sym-no2} why we impose this condition.

\begin{figure}
    \centering
    \includegraphics[width=0.5\linewidth]{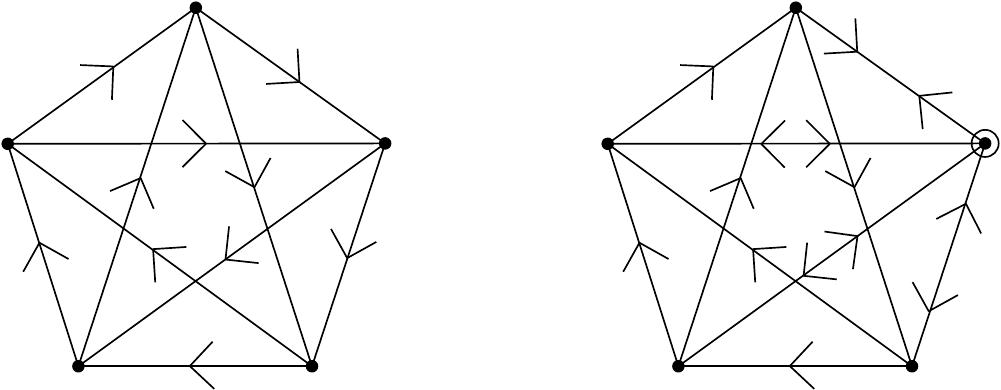}
    \caption{\label{fig: example-pentagon} $\protect \overrightarrow{\Sigma}_{5,2}$ and $\protect \overrightarrow{\Sigma}_{5,2}^\nu$.}
\end{figure}

\begin{definition}
We consider the graph $\sdgraph_{d,k} = (V,E,\sigma, \tau)$, defined as the graph $\overrightarrow{C}_{d,k} = (V,E)$ in which we consider every edge as a positive edge and $\tau$ the same orientation. Let $\nu: V\rightarrow \{+,-\}$ be a switching function sending all vertices to $+$ except one. 

Figure \ref{fig: example-pentagon} shows an example.
\end{definition}

\begin{theorem}
\label{theo:ssym}
     With these notations, $X_{\sdgraph_{d,k}^\nu}(\mathbf{x};t)\in SSym[t].$
\end{theorem}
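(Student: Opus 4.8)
The plan is to deduce the statement from Ellzey's symmetry theorem for circular indifference digraphs \cite{E17} combined with the switching formalism, and then to upgrade the resulting quasisymmetry to full signed symmetry by an ascent-preserving involution. First I would dispose of the unswitched graph: since every edge of $\sdgraph_{d,k}$ is positive with the orientation of $\overrightarrow{C}_{d,k}$, its proper colorings are exactly the proper colorings of the underlying circular indifference digraph with colors allowed in $\Z$, and $asc$ agrees with the ordinary ascent statistic, so $X_{\sdgraph_{d,k}}(\mathbf{x};t)$ is $X_{\overrightarrow{C}_{d,k}}(x;t)$ read on the variables $(\dots,x_{-1},x_0,x_1,\dots)$. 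As $\overrightarrow{C}_{d,k}$ is a circular indifference digraph, Ellzey's theorem gives $X_{\overrightarrow{C}_{d,k}}(x;t)\in Sym[t]$, and by the observation preceding Proposition \ref{prop:sym-no1} this forces $X_{\sdgraph_{d,k}}(\mathbf{x};t)\in SSym[t]$. Writing $v_\ell$ for the vertex sent to $-$ by $\nu$, I would then record that switching preserves properness and, edge by edge, compatibility (hence the value of $asc$), so that $\kappa\mapsto\kappa^\nu$ is an $asc$-preserving bijection and $X_{\sdgraph_{d,k}^\nu}(\mathbf{x};t)=\sum_{\kappa}t^{asc(\kappa)}\mathbf{x}^{\kappa^\nu}$, where $\kappa^\nu$ negates the color of $v_\ell$ only; equivalently, in \eqref{equation: suma-signed-chambers} the chambers of $\mathscr{H}_{\Sigma_{d,k}^\nu}$ are the images of those of $\mathscr{H}_{\Sigma_{d,k}}$ under the reflection $r_\ell\colon\xi_\ell\mapsto-\xi_\ell$, with equal ascent. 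Crucially this realizes $X_{\sdgraph_{d,k}^\nu}$ explicitly but \emph{not} as a substitution in the symmetric series $X_{\sdgraph_{d,k}}$; indeed a single switch does not preserve $SSym$-membership in general (a single positive edge is signed symmetric, its negative switch is not, by Proposition \ref{prop:sym-no1}), so the cyclic structure must genuinely be used.

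Since $X_{\sdgraph_{d,k}^\nu}(\mathbf{x};t)\in SQSym[t]$ is already known, it remains to check invariance of each coefficient of $t^a$ under a generating set of $S\mathfrak{S}$: the block transpositions $s_m$ (simultaneously $x_m\leftrightarrow x_{m+1}$ and $x_{-m}\leftrightarrow x_{-(m+1)}$ for $m\geq1$) and the flip $t_1\colon x_1\leftrightarrow x_{-1}$. Here I would pass to the double cover $G_{\Sigma_{d,k}^\nu}$: the colorings $\kappa^\nu$ correspond to symmetric colorings $\tilde\kappa$ with $\tilde\kappa(u_{-i})=-\tilde\kappa(u_i)$, the switch at $v_\ell$ being realized by crossing the two antipodal copies of $\overrightarrow{C}_{d,k}$ at the pair $u_\ell,u_{-\ell}$, so that $G_{\Sigma_{d,k}^\nu}$ retains the unit circular-arc structure with the antipodal map acting as an order-two rotation. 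For each block transposition $s_m$ I would apply Ellzey's ascent-preserving color-swapping involution simultaneously on $\{m,m+1\}$ and on $\{-m,-(m+1)\}$; antipodal compatibility of these paired swaps keeps the constraint $\tilde\kappa(u_{-i})=-\tilde\kappa(u_i)$ intact and preserves $asc$, yielding the desired invariance for the $s_m$.

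The main obstacle is the flip $t_1$, the genuinely signed generator with no unsigned analogue. Here I would construct the analogous involution exchanging the colors $1$ and $-1$ along a suitable arc-interval, and the difficulty is to make it respect simultaneously the antipodal constraint through the single crossing at $u_\ell,u_{-\ell}$, the fixed color $0$ together with the hyperplane $\{\xi_\ell=0\}$, and the ascent count. I expect this to be where the rotational invariance of $\overrightarrow{C}_{d,k}$ is essential: it should let one normalize the position of $\ell$ and reduce the interaction of the crossing with the colors $0,\pm1$ to a single canonical case, which is precisely why the rotation-invariance hypothesis is imposed (cf. Remark \ref{rem:sym-no2}). Verifying that this last involution is well defined and $asc$-preserving is, in my view, the crux of the whole argument.
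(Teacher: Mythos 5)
Your skeleton matches the paper's proof — reduce signed symmetry to invariance under the generators of $\sigper$ (block transpositions $s_m$ and the flip $t_1$), transfer colorings through the ascent-preserving switching bijection, and build involutions from Ellzey's two-color swap — but the proposal has a genuine gap, and it sits exactly at the step you declare finished: the block transpositions. After switching, the monomial attached to a proper coloring $\kappa$ of $\sdgraph_{d,k}$ is $x_{-\kappa(v_\ell)}\prod_{i\neq \ell}x_{\kappa(v_i)}$, where $v_\ell$ is the switched vertex. So the involution realizing $s_m$ must do more than exchange the numbers of occurrences of $m$ and $m+1$ (and of $-m$ and $-(m+1)$): whenever $\kappa(v_\ell)\in\{m,m+1\}$ it must also change the color of $v_\ell$ itself, so that the letter $x_{-m}$ contributed by $v_\ell$ becomes $x_{-(m+1)}$. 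Ellzey's involution does not do this: it swaps colors only on components of the two-colored induced subgraph that are odd directed paths, and fixes even paths and cycles pointwise. If $v_\ell$ lies in an even path or a cycle, your paired swaps produce (say with $\kappa(v_\ell)=m$ and $a,b,c,d$ counting the colors $m,m+1,-m,-(m+1)$ on the remaining vertices) the monomial $x_m^{b-1}x_{m+1}^{a+1}x_{-m}^{d+1}x_{-(m+1)}^{c}$ instead of the required $x_m^{b}x_{m+1}^{a}x_{-m}^{d}x_{-(m+1)}^{c+1}$. This failure is precisely what the paper's case analysis is built for: a cycle through the switched vertex is swapped (for cycles, unlike even paths, the swap is ascent-preserving), and an even directed path through the switched vertex is handled by composing with the rotation of $\sdgraph_{d,k}$ carrying the path-partner $\hat v$ of $v_\ell$ onto $v_\ell$. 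That rotation is the one and only place where rotational invariance of $\overrightarrow{C}_{d,k}$ enters; you instead predict it will be needed for $t_1$, so the difficulty is misplaced.

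Two further discrepancies. First, your geometric picture of the double cover is wrong: $\Sigma_{d,k}^\nu$ is balanced (it is a switch of an all-positive graph), so $G_{\Sigma_{d,k}^\nu}$ is the \emph{disjoint union} of two copies of $\overrightarrow{C}_{d,k}$, and the antipodal map exchanges the two components; it is not an order-two rotation of a single circular-arc structure. Moreover the monomial is read off the vertices $u_1,\dots,u_d$, which are split between the two components ($u_\ell$ in one, every other $u_i$ in the other) — another way of seeing that the pair $u_\ell,u_{-\ell}$ needs special treatment for \emph{every} generator, not only $t_1$. Second, the flip $t_1$, which you leave as an admitted crux with no construction, requires no new idea in the paper: the consecutive-color-swap machinery works for any pair of consecutive integers, including $(0,1)$ and $(-1,0)$, so one writes $(1,-1)=(0,1)(0,-1)(0,1)$ and applies it three times. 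In the paper's proof the flip is therefore the easy step and the block transpositions carry all the work — the opposite of your assessment — and the concrete mechanism needed for $s_m$ (swap cycles through $v_\ell$, rotate for even paths through $v_\ell$) is absent from the proposal.
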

\begin{proof}
    We need to prove that the coefficient of $t^a$ for $a\in \{0,\dots,|E|\}$ is invariant under the action of $S\mathfrak{S}$ on its variables. Recall that $$[t^a]X_{\sdgraph} = \sum\limits_{\substack{\kappa \text{ proper  } \\ asc(\kappa) = a}}\textbf{x}^\kappa.$$ We want to show that given any $\pi\in S\mathfrak{S}$, then $$\sum\limits_{\substack{\kappa \text{ proper  } \\ asc(\kappa) = a}}x_{\kappa(v_1)}\cdots x_{\kappa(v_d)} = \sum\limits_{\substack{\kappa \text{ proper  } \\ asc(\kappa) = a}}x_{\pi(\kappa(v_1))}\cdots x_{\pi(\kappa(v_d))}.$$ For each $\pi\in S\mathfrak{S}$, we will  define an involution $\Phi_\pi$ on $\mathcal{C}(\sdgraph_{d,k}^\nu)$, the set of proper colorings of $\sdgraph_{d,k}^\nu$, such that:
    \begin{enumerate}
        \item $asc(\kappa) = asc(\Phi_\pi(\kappa))$
        \item $\#(\Phi_\pi(\kappa))\inv (m) = \# (\pi \circ \kappa)\inv(m)$
    \end{enumerate}
    for each proper coloring $\kappa$ of $\sdgraph_{d,k}^\nu$ and for each $m\in \Z$. 
    
    Note that $\sigper$ is generated by $\pi_i:=(i,i+1)(\overline{i},\overline{i+1})$ for $i\in \N$ and $\pi_0:=(1,\overline{1})$. Thus it suffices to define $\Phi_i:=\Phi_{\pi_i}$ for $i\in \N_0$. Recall that the proper colorings of $\sdgraph _{d,k}$ and the ones of $\sdgraph _{d,k}^\nu$ are in bijection and that this bijection preserves the number of ascents. We will define an involution $\tilde{\Phi}_i$ between proper colorings of $\sdgraph_{d,k}$, and then $\Phi_i$ will be the image map under $\nu$ between proper colorings of $\sdgraph_{d,k}^\nu$.
    
    \[\begin{tikzcd}
	{\mathcal{C}(\overrightarrow{\Sigma}_{d,k}^\nu)} && {\mathcal{C}(\overrightarrow{\Sigma}_{d,k}^\nu)} \\
	{\mathcal{C}(\overrightarrow{\Sigma}_{d,k})} && {\mathcal{C}(\overrightarrow{\Sigma}_{d,k})}
	\arrow["{\Phi_i}", from=1-1, to=1-3]
	\arrow["{\nu}", leftrightarrow, from=1-1, to=2-1]
	\arrow["{\nu}", leftrightarrow, from=1-3, to=2-3]
	\arrow["{\tilde{\Phi}_i}"', from=2-1, to=2-3]
\end{tikzcd}\]

     Let us first suppose $i\in\N$, and define $\Phi_i$. Given $\kappa$ a proper coloring of $\sdgraph_{d,k}$ we consider $\sdgraph_i$ the induced directed  subgraph containing only the vertices colored by $i$ and $i+1$. As $\sdgraph_{d,k}$ is a circular indifference graph, by Lemma 5.4 of [E17 +], the connected components of this graph are either directed  paths or directed  cycles of even length. We will then define a coloring obtained from $\kappa$, distinguishing cases on each connected component of $\sdgraph_i$.
    \begin{itemize}
        \item If it is a cycle or a directed  path with an even number of vertices not containing $v'$, the vertex verifying $\sigma(v') = -$, then we leave $\kappa$ unchanged.

        \item If it is a directed  path with an odd number of vertices or a cycle containing $v'$, we exchange in this component the occurrences of $i$ and $i+1$. The colors not belonging to the component remain unchanged, 

        \item If it is a directed  path $v_{i_1},v_{i_2},\dots,v_{i_\ell}$ with an even number of vertices containing $v'$, then we pair adjacent vertices as $(v_{i_1},v_{i_2}),\dots,(v_{i_{\ell -1}},v_{\ell})$, and if $\hat{v}$ is the vertex in the same pair as $v'$ consider in $\sdgraph_{d,k}$ the rotation $\rho$ that sends $\hat{v}$ to $v'$ and define the coloring sending $v$ to $\kappa(\rho\inv (v))$ for all $v$ in $\sdgraph_{d,k}$.
        {\em This point justifies the condition of invariance under rotation imposed to $\sdgraph_{d,k}$.}

        \begin{figure}[h]
            \centering
            \includegraphics[width=0.65\linewidth]{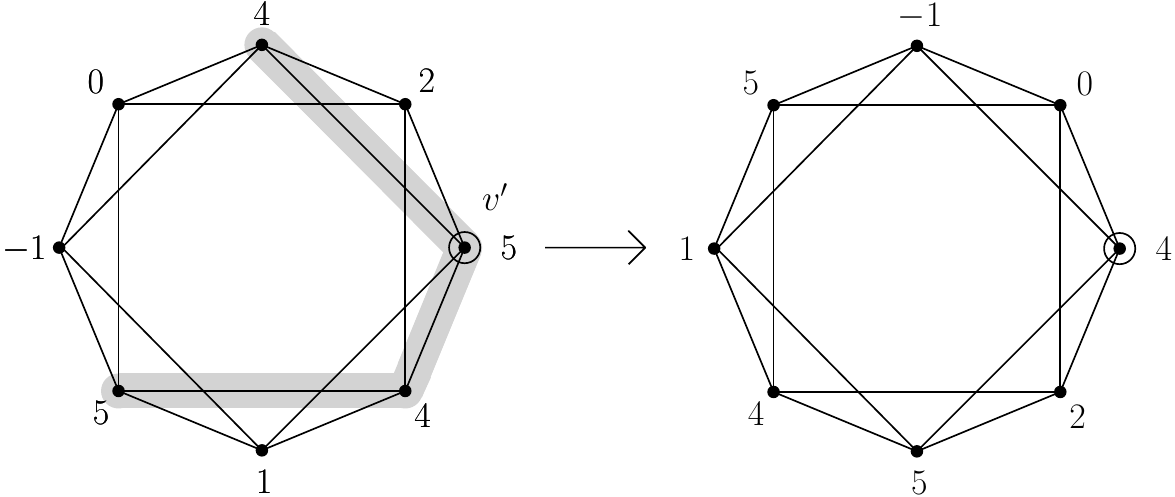}
            \caption{\label{fig: directed-path}
            Example of a directed  path with an even number of vertices for $i=4$.}
            
        \end{figure}
    \end{itemize}
    We may deal exactly in the same way $\sdgraph_{-i}$, the induced directed  subgraph containing only the vertices colored by $-i$ and $-(i+1)$. 
    Thus we associate to $\kappa$ a proper coloring $\tilde{\Phi}_i(\kappa)$. It is to check that $\tilde{\Phi}_i$ is an involution and that the number of ascents is preserved (condition (1)). Also the number of times that the colors $i$ and $i+1$ have been used have been exchanged, the same is true for the colors $-i$ and $-(i+1)$. As these are the only integers affected by $\pi_i$, condition (2) is satisfied.

    It now remains to deal with $\pi_0 = (1,-1)$. Note that we have described a method to get a coloring that exchanges the number of occurrences of two consecutive integers and that keeps the number of ascents invariant. Also note that we can decompose $(1,-1) = (0,1)(0,-1)(0,1)$ so by applying this method 3 times we can define $\Phi_0$ satisfying the required conditions.
\end{proof}

\begin{figure}[ht!]
    \centering
    \includegraphics[width=0.85\linewidth]{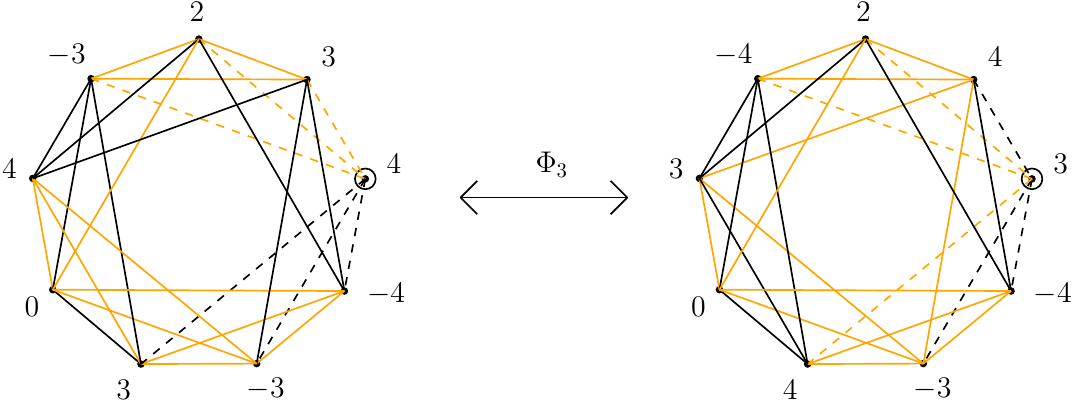}
    \caption{\label{fig: eneagon}
    Example of the involution $\Phi_3$ where the switched vertex is circled and the figure shows the coloring of $\protect \sdgraph_{9,3}$. The edges corresponding to ascents are drawn in orange. The corresponding monomials are $t^{15}x_0x_2x_3^2x_{-3}^2x_4^2x_{-4}$ and $t^{15} x_1x_2x_3^2x_{-3}x_4^2x_{-4}^2$ respectively, which are reciprocally imaged by the transposition $\pi_3$.}
    
\end{figure}

\begin{remark}
\label{rem:sym-no2}
We want to emphasize on the fact that if we relax the conditions in Theorem \ref{theo:ssym}, then the (signed) symmetry is no longer guaranteed. 
We refer to Figure \ref{figure: counterexample2} which shows two directed signed graphs whose chromatic quasisymmetric function is not in $SSym$:
\begin{itemize}
    \item 
in the one on the left, we have relaxed the condition of invariance by rotation;
    \item 
in the one on the right, we have applied a switching whose value is $-1$ on more than one vertex (two). 
\end{itemize}
\end{remark}

\begin{figure}[ht!]
    \centering
    \includegraphics[width=0.5\linewidth]{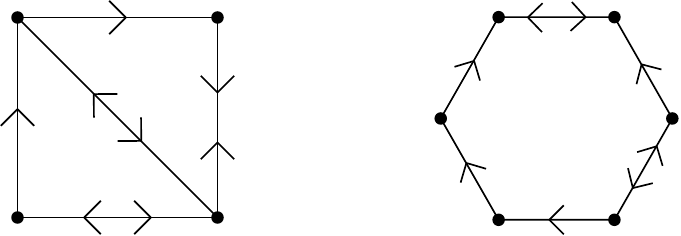}
    \caption{\label{figure: counterexample2}
Two examples of directed signed graphs $\protect\sdgraph$ such that $X_{\protect\sdgraph}(\protect\textbf{x};t)\notin SSym$.}
\end{figure}

\bibliographystyle{amsalpha}

\end{document}